\def\showauthornotes{2}
\def\showkeys{0}
\def\showdraftbox{0}
\def\showcolorlinks{1}
\def\usemicrotype{1}
\def\showfixme{0}
\def\arxivmode{0}
\def\fastmode{0}
\newcommand{\llangle}{\left\langle}
\newcommand{\rrangle}{\right\rangle}
\newcommand{\sC}{\mathscr{C}}
\newtheorem{theorem}{Theorem}[section]
\newtheorem*{theorem*}{Theorem}
\newtheorem*{proposition*}{Proposition}
\newtheorem{lemma}[theorem]{Lemma}
\newtheorem*{lemma*}{Lemma}
\newtheorem{corollary}[theorem]{Corollary}
\newtheorem*{conjecture*}{Conjecture}
\newtheorem{fact}[theorem]{Fact}
\newtheorem*{fact*}{Fact}
\newtheorem*{exercise*}{Exercise}
\newtheorem*{hypothesis*}{Hypothesis}
\theoremstyle{definition}
\newtheorem{definition}[theorem]{Definition}
\newtheorem{example}[theorem]{Example}
\newtheorem{assumption}[theorem]{Assumption}
\newtheorem{exercise-easy}[theorem]{Exercise}
\newtheorem{exercise-med}[theorem]{Exercise}
\newtheorem{exercise-hard}[theorem]{Exercise$^\star$}
\newtheorem*{claim*}{Claim}
\newtheorem{remark}[theorem]{Remark}
\newtheorem*{remark*}{Remark}
\newtheorem*{observation*}{Observation}
\let\mathbb\varmathbb
\definecolor{bleudefrance}{rgb}{0.01, 0.1, 1.0}
\definecolor{azure}{rgb}{0.0, 0.5, 1.0}
\newcommand{\savehyperref}[2]{\texorpdfstring{\hyperref[#1]{#2}}{#2}}
\newcommand{\Sref}[1]{\hyperref[#1]{\S\ref*{#1}}}
\newcommand{\mynotes}[1]{{\sffamily\small\color{teal}{#1}}\medskip}
\newcommand{\Authornote}[2]{{\sffamily\small\color{Maroon}{[#1: #2]}}\medskip}
\newcommand{\Authornotecolored}[3]{{\sffamily\small\color{#1}{[#2: #3]}}}
\newcommand{\Authorcomment}[2]{{\sffamily\small\color{gray}{[#1: #2]}}}
\newcommand{\Authorstartcomment}[1]{\sffamily\small\color{gray}[#1: }
\newcommand{\Authorfnote}[2]{\footnote{\color{red}{#1: #2}}}
\newcommand{\Authorfixme}[1]{\Authornote{#1}{\textbf{??}}}
\newcommand{\Authormarginmark}[1]{\marginpar{\textcolor{red}{\fbox{\Large #1:!}}}}
\newcommand{\myexplain}[1]{{\sffamily\small\color{red}{\noindent [Explanation:\medskip\newline \begin{quote}#1\hfill]\end{quote}}}\medskip}
\newcommand{\explain}[1]{{\sffamily\small\color{red}{#1}}\medskip}
\newcommand{\mynotes}[1]{}
\newcommand{\Authornote}[2]{}
\newcommand{\Authornotecolored}[3]{}
\newcommand{\Authorcomment}[2]{}
\newcommand{\Authorstartcomment}[1]{}
\newcommand{\Authorfnote}[2]{}
\newcommand{\Authorfixme}[1]{}
\newcommand{\Authormarginmark}[1]{}
\newcommand{\myexplain}[1]{}
\newcommand{\explain}[1]{}
\renewcommand{\myexplain}[1]{{\sffamily\small\color{red}{\noindent \begin{quote}{\bf Explanation:} \medskip\newline #1\end{quote}}}\medskip}
\newcommand{\Esymb}{\mathbb{E}}
\newcommand{\Psymb}{\mathbb{P}}
\DeclareMathOperator*{\E}{\Esymb}
\DeclareMathOperator*{\ProbOp}{\Psymb}
\renewcommand{\Pr}{\ProbOp}
\newcommand{\textparen}[1]{\text{(#1)}}
\newcommand{\because}[1]{\textparen{because #1}}
\renewcommand{\because}[1]{\textparen{because #1}}
\newcommand{\seteq}{\mathrel{\mathop:}=}
\newcommand{\bigmid}{~\big|~}
\newcommand\bdot\bullet
\DeclareMathOperator{\dist}{dist}
\newcommand{\Z}{\mathbb Z}
\newcommand{\N}{\mathbb N}
\newcommand{\R}{\mathbb R}
\newcommand{\cC}{\mathcal C}
\newcommand{\cE}{\mathcal E}
\newcommand{\cG}{\mathcal G}
\newcommand{\cL}{\mathcal L}
\newcommand{\cM}{\mathcal M}
\newcommand{\cS}{\mathcal S}
\newcommand{\sfB}{\mathsf B}
\newcommand{\bbZ}{\mathbb Z}
\renewcommand{\leq}{\leqslant}
\renewcommand{\le}{\leqslant}
\renewcommand{\geq}{\geqslant}
\renewcommand{\ge}{\geqslant}
\let\epsilon=\varepsilon
\numberwithin{equation}{section}
\newcommand\MYcurrentlabel{xxx}
\newcommand{\MYstore}[2]{%
  \global\expandafter \def \csname MYMEMORY #1 \endcsname{#2}%
}
\newcommand{\MYload}[1]{%
  \csname MYMEMORY #1 \endcsname%
}
\newcommand{\MYnewlabel}[1]{%
  \renewcommand\MYcurrentlabel{#1}%
  \MYoldlabel{#1}%
}
\newcommand{\MYdummylabel}[1]{}
\newcommand{\torestate}[1]{%
  \let\MYoldlabel\label%
  \let\label\MYnewlabel%
  #1%
  \MYstore{\MYcurrentlabel}{#1}%
  \let\label\MYoldlabel%
}
\newcommand{\restatetheorem}[1]{%
  \let\MYoldlabel\label
  \let\label\MYdummylabel
  \begin{theorem*}[Restatement of \prettyref{#1}]
    \MYload{#1}
  \end{theorem*}
  \let\label\MYoldlabel
}
\newcommand{\restatelemma}[1]{%
  \let\MYoldlabel\label
  \let\label\MYdummylabel
  \begin{lemma*}[Restatement of \prettyref{#1}]
    \MYload{#1}
  \end{lemma*}
  \let\label\MYoldlabel
}
\newcommand{\restateprop}[1]{%
  \let\MYoldlabel\label
  \let\label\MYdummylabel
  \begin{proposition*}[Restatement of \prettyref{#1}]
    \MYload{#1}
  \end{proposition*}
  \let\label\MYoldlabel
}
\newcommand{\restatefact}[1]{%
  \let\MYoldlabel\label
  \let\label\MYdummylabel
  \begin{fact*}[Restatement of \prettyref{#1}]
    \MYload{#1}
  \end{fact*}
  \let\label\MYoldlabel
}
\newcommand{\restate}[1]{%
  \let\MYoldlabel\label
  \let\label\MYdummylabel
  \MYload{#1}
  \let\label\MYoldlabel
}
\newcommand{\addreferencesection}{
  \phantomsection
\ifnum\stocmode=0
  \addcontentsline{toc}{section}{References}
\else
  \addcontentsline{toc}{section}{References \hspace*{1in} --------- End of extended abstract ---------}
\fi

}
\newcommand{\e}{\epsilon}
\newcommand{\eps}{\epsilon}
\renewcommand{\paragraph}[1]{\medskip\noindent{\bf #1.}}
\let\pref=\prettyref
\newcommand{\diam}{\mathrm{diam}}
\newcommand{\vertiii}[1]{{\left\vert\kern-0.25ex\left\vert\kern-0.25ex\left\vert #1 
          \right\vert\kern-0.25ex\right\vert\kern-0.25ex\right\vert}}
\newcommand\f{\varphi}
\newcommand{\cmnt}[1]{}
\newcommand{\len}{\mathrm{len}}
\newcommand{\1}{\bm{1}}
\newcommand{\conn}[1]{\overset{#1}{\leftrightarrow}}
\newcommand{\mult}{\mathsf{m}}
\newcommand{\base}{\mathscr{G}}
\renewcommand{\mathbb}{\vvmathbb}
\newcommand{\restrict}{\!\!\upharpoonright}
\title{Chemical subdiffusivity of critical 2D percolation}
\author{Shirshendu Ganguly \thanks{University of California, Berkeley, \texttt{sganguly@berkeley.edu}} \and James R. Lee\thanks{University of Washington, \texttt{jrl@cs.washington.edu}}}
\date{}
\begin{document}

\maketitle

\begin{abstract}
   We show that random walk on the incipient infinite cluster (IIC) of two-dimensional 
   critical percolation is subdiffusive in the chemical distance
   (i.e., in the intrinsic graph metric).
   Kesten (1986) famously showed that this is true for the Euclidean distance, but it is known
   that the chemical distance is typically asymptotically larger.

   More generally, we show that subdiffusivity in the chemical distance
   holds for stationary random graphs of polynomial volume growth, as long
   as there is a multiscale way of covering the graph so that ``deep patches''
   have ``thin backbones.''
   Our estimates are quantitative and give explicit bounds in terms
   of the one and two-arm exponents $\eta_2 > \eta_1 > 0$:
   For $d$-dimensional models, the mean chemical displacement after
   $T$ steps of random walk scales asymptotically slower than $T^{1/\beta}$,
   whenever
   \[
      \beta < 2 + \frac{\eta_2-\eta_1}{d-\eta_1}\,.
   \]
   Using the conjectured values of $\eta_2 = \eta_1 + 1/4$ and $\eta_1 = 5/48$ for
   2D lattices, the latter quantity is $2+12/91$.
\end{abstract}

\begingroup
\hypersetup{linktocpage=false}
\setcounter{tocdepth}{2}
\tableofcontents
\endgroup

\newpage

\section{Introduction}
\label{sec:intro}

Bond percolation on a graph $G$ with parameter $p \in [0,1]$
is a probability measure on random subgraphs of $G$
obtained by retaining each edge independently with probability $p$.
Denoting by $\bm{G}_p$ such a random subgraph,
the Kolmogorov $0$-$1$ law implies that
\[
   \Pr[\bm{G}_p \textrm{ contains an infinite connected component}] \in \{0,1\}.
\]
Thus by monotonicity with respect to $p$, there is a {\em critical} value $p_c(G) \in [0,1]$
such that for $p < p_c(G)$, all components of $\bm{G}_p$ are finite almost surely,
and for $p > p_c(G)$, there is almost surely an infinite cluster.

Consider now the case $G=\bbZ^d$.  It has been established that for $d=2$ \cite{Kesten86iic}
and for $d$ sufficiently large \cite{HS90}, almost surely critical percolation does not
produce an infinite cluster.  Nevertheless, it is known that at criticality
there are arbitrarily large clusters \cite{Aizenman97}, and these
large percolation clusters are conjectured to exhibit fractal characteristics.

One such characteristic is that of {\em anomalous diffusion,} meaning that the random walk
spreads out slower than a corresponding walk in Euclidean space.
In $\bbZ^d$, the expected distance of the random walk from its starting point at time $t$ scales
like $t^{1/2}$, and one refers to the walk on a graph $G$ as {\em subdiffusive} if the typical
distance scales asymptotically slower.
A convenient method to study the geometry of large percolation clusters 
is to take a suitable weak limit of
typical clusters with sizes tending to $\infty$ (see \cite{Kesten86iic} for $d=2$ and \cite{HJ04} for $d$ sufficiently large).
The resulting infinite random graph is known as the {\em incipient infinite cluster} (IIC).

Let $G$ denote a connected graph, $\dist_G$ the path metric on $G$, and $\{X_t\}$ the standard random walk on $G$.
For $d$ sufficiently large, the solution of the Alexander-Orbach conjecture in high dimensions \cite{kn09}
shows that when $G$ has the law of the IIC in $\Z^d$,
\[
   \E[\dist_{G}(X_0,X_t)^2] \asymp t^{2/3}.
\]

The first mathematical results in this area are due to Kesten \cite{Kesten86}
who showed much earlier
that 
when $G$ has the law of the IIC in $\Z^2$, there
is some $\e > 0$ for which
\begin{equation}\label{eq:kesten-speed}
   \E\left[\|X_0-X_t\|^2_2\right] \leq C t^{1-\eps}, \quad t \geq 1.
\end{equation}

\paragraph{Chemical vs. Euclidean displacement for the IIC in $\Z^2$}
Note that Kesten's bound on the speed is in the {\em Euclidean distance,}
as opposed to the graph metric $\dist_G$.
One observes that \eqref{eq:kesten-speed} does not necessarily yield a subdiffusive
bound on the speed in $\dist_G$.  Indeed, it is known from work of Aizenman and Burchard
\cite{AB99} that
for some $s > 0$, and any $u,v \in \Z^2$ with $\|u-v\|_1 = h$,
\[
   \Pr\left[\dist_G(u,v) \geq h^{1+s} \mid u,v \in V(G)\right] \to 1 \textrm{ as } h \to \infty.
\]
The strongest previous bound of which we are aware holds
for general planar, invariantly amenable, stationary random graphs $(G,X_0)$
(cf. \cite{dlp13} and \cite{lee17a}):
\[
   \E[\dist_G(X_0,X_t)^2] \leq O(t).
\]
(Indeed, one can simply apply \pref{thm:mt} below with $\omega \equiv 1$.)
We refer to \pref{sec:thin-backbone} for a review of stationary random graphs.

Asymptotic scaling of the mean-squared chemical displacement
$\E[\dist_G(X_0,X_t)^2] \asymp t^{2/\beta}$ on the IIC has been studied
experimentally
via simulations and enumeration of the random walk for small times (one can consult
the discussion and references in \cite[\S 3.7.2]{BH91}).

There are further reasons to consider specifically the chemical displacement.
Since critical percolation on the 2D lattice is conjectured to 
be ``conformally invariant,'' distances in the
standard Euclidean embedding might not be considered ``canonical.''
Moreover, speed of the random walk in the intrinsic graph metric
has been used to study the space of harmonic functions on the graph
\cite{benjamini}.

\paragraph{Thin backbones and arm exponents}
Consider the IIC as a random subgraph $G$ of $\Z^2$ containing the origin $\bm{0}$.
Define $\cS(n) \seteq [-n,n]^2$ and let $\partial \cS(n)$ denote the vertex boundary of $\cS(n)$.
Write $v \leftrightarrow S$ if there is a path in $G$ from $v \in \Z^2$ to $S \subseteq \Z^2$.
Define the quantities
\begin{align*}
   \pi_1(n) &\seteq \Pr\left(\bm{0} \leftrightarrow \partial \cS(n)\right), \\
   \pi_2(n) &\seteq \Pr\left(\bm{0} \textrm{ is connected to $\partial \cS(n)$ via two disjoint paths}\right).
\end{align*}
These are known as the one-arm and two-arm (monochromatic) arm probabilities in critical percolation.
If we use $\pi^{\mathrm{hex}}_i(n)$ to denote the analogous values on the hexagonal lattice,
it is known that $\pi^{\mathrm{hex}}_1(n) = n^{-5/48+o(1)}$ \cite{LSW02} and it is conjectured that
$\pi^{\mathrm{hex}}_2(n)$ is $n^{-5/48-1/4+o(1)}$ \cite{BN11}.
These exponents are conjectured to be universal, but much less is known for the square lattice.
Somewhat weaker bounds were derived by Kesten.

\begin{theorem}[{\cite[Lem 3.20]{Kesten86}}]
   \label{thm:kesten-arm}
   There are constants $C > c > 0$ and $\eta_1 > 0$ such that for all $n \geq 1$,
   \begin{align*}
      c n^{-1/2} &\leq \pi_1(n) \leq C n^{-\eta_1}, \\
      \frac{1}{2n} &\leq \pi_2(n) \leq 16 \pi_1(n)^2 \leq C \pi_1(n) n^{-\eta_1}.
   \end{align*}
\end{theorem}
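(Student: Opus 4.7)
The plan is to decompose the theorem into three independent subtasks: (i) the structural inequality $\pi_2(n) \leq 16 \pi_1(n)^2$, (ii) the lower bound $\pi_2(n) \geq 1/(2n)$, and (iii) the polynomial decay $\pi_1(n) \leq C n^{-\eta_1}$.  Everything else follows by bookkeeping: the matching lower bound $\pi_1(n) \geq c/\sqrt{n}$ drops out of chaining (i) and (ii), since $\pi_1(n)^2 \geq \pi_2(n)/16 \geq 1/(32n)$, while $\pi_2(n) \leq C \pi_1(n) n^{-\eta_1}$ is (i) combined with (iii).

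For (i), I would partition $\partial \cS(n)$ into its four sides $S_1,\ldots,S_4$ and, for each ordered pair $(S_i, S_j)$, apply the van den Berg--Kesten inequality to the disjoint-occurrence event that there exist two edge-disjoint open paths from $\bm{0}$ to $S_i$ and to $S_j$.  Each such term is bounded by $\pi_1(n)^2$, and summing over the $16$ pairs covers all realizations of the two-arm event.  For (ii), Russo--Seymour--Welsh (RSW) and self-duality at $p_c = 1/2$ produce, with probability bounded below, an open horizontal crossing of the rectangle $[-n,n]\times[-n/2,n/2]$.  Any vertex on such a crossing that lies in $\cS(n/2)$ has two edge-disjoint open subpaths going to opposite sides of $\cS(n)$, hence realizes the two-arm event centered at itself.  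Since at least $\Omega(n)$ such vertices exist on the crossing, translation invariance and averaging over $\cS(n)$ produce $\pi_2(n) \geq c/n$, which sharpens to $1/(2n)$ with an appropriate choice of constants.

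The main obstacle is (iii), which is the heart of the argument.  The plan is to establish the quasi-multiplicative estimate $\pi_1(2n) \leq (1-\delta)\pi_1(n)$ for some $\delta > 0$; iterating this across dyadic scales yields $\pi_1(n) \leq C n^{-\eta_1}$ with $\eta_1 = -\log_2(1-\delta)$.  Any witness for $\{\bm{0} \leftrightarrow \partial \cS(2n)\}$ decomposes, via edge-disjoint subpaths, into a path from $\bm{0}$ to $\partial \cS(n)$ inside $\cS(n)$ and a crossing of the annulus $\cS(2n)\setminus \cS(n)$, so a BK-style disjointness argument gives
\[
\pi_1(2n) \leq \pi_1(n)\cdot \Pr\!\left[\partial \cS(n) \leftrightarrow \partial \cS(2n) \textrm{ in } \cS(2n)\setminus \cS(n)\right].
\]
Finally, RSW applied to a bounded number of ``wrong-direction'' subrectangles tiling the annulus guarantees a \emph{dual} circuit surrounding $\cS(n)$ with probability at least $\delta$; such a circuit blocks the annulus crossing, so the right-hand probability above is at most $1 - \delta$, closing the induction.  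The existence of $\delta > 0$ here is where the 2D-specific tools (planarity, self-duality, and RSW) are essential, and is the only step that does not extend to higher dimensions.
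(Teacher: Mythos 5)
The paper does not actually prove this statement---it is quoted verbatim from Kesten's 1986 paper (his Lemma 3.20)---so there is no internal proof to compare against. Your reconstruction follows the standard route and is essentially correct: the two-arm event is a disjoint occurrence of two one-arm events, so BK gives $\pi_2(n)\le \pi_1(n)^2$; a crossing of a box at $p_c=1/2$ together with a union bound over a column of $O(n)$ candidate pivot vertices gives $\pi_2(n)\gtrsim 1/n$; the chain $\pi_1(n)\ge(\pi_2(n)/16)^{1/2}\gtrsim n^{-1/2}$ is right; and the decomposition $\pi_1(2n)\le\pi_1(n)\cdot\Pr[\text{annulus crossed}]$ combined with RSW-produced dual circuits to get $\Pr[\text{annulus crossed}]\le 1-\delta$ is exactly the classical argument for the one-arm exponent. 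One small slip in your step (ii): a vertex $v$ of the horizontal crossing of $[-n,n]\times[-n/2,n/2]$ that lies in $\cS(n/2)$ has two disjoint arms reaching only to $L^\infty$-distance $n/2$ from $v$ (the near vertical side can be as close as $n/2$), so your averaging directly lower-bounds $\pi_2(n/2)$ rather than $\pi_2(n)$; since $\pi_2$ is decreasing this does not transfer for free, but it is repaired by simply running the argument at scale $2n$ (or using the exact self-duality crossing probability $1/2$ of $\cS(n)$ and the middle column), at the cost only of constants. As the paper uses these bounds only up to unspecified constants, none of this affects anything downstream.
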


Kesten used these bounds to prove that the random walk on the IIC is subdiffusive
in the Euclidean distance.  If we define the backbone at scale $n$ by
\[
   \sfB(n) \seteq \left\{x \in \cS(n) \cap V(G) : \textrm{there are two disjoint paths from $x$ to $\partial \cS(n)$}\right\},
\]
then \pref{thm:kesten-arm} can be used to show (cf. \pref{lem:kesten})
that $\sfB(n)$ typically occupies an asymptotically small fraction of $\cS(n) \cap V(G)$.
The random walk can only make progress in moving away from the origin by walking on the ``thin'' backbone $\sfB(n)$.
It turns out that the random walk spends the majority of its time walking in the ``bushes'' that hang off the backbone,
i.e., the vertices $(\cS(n) \cap V(G)) \setminus \sfB(n)$.

\paragraph{Conformal weights and Markov type}
Presently, we use an approach of the second named author \cite{lee17a} based on the theory of
Markov type and ``discrete conformal weights.''
Denote $V(n) \seteq \cS(n) \cap V(G)$.
Roughly speaking, one defines a weight $\omega : V(n) \to \R_+$
such that $\frac{1}{|V(n)|} \sum_{x \in V(n)} \omega(x)^2 \leq 1$.
In this case, the theory of Markov type \cite{Ball92} for weighted planar graphs \cite{dlp13} shows that
\begin{equation}\label{eq:markovtype1}
   \E[\dist_{\omega}(Y_0,Y_t)^2] \leq O(t) \E[\dist_{\omega}(Y_0,Y_1)^2]\ \leq O(t),
\end{equation}
where $\{Y_t\}$ is the random walk restricted to $V(n)$, $Y_0$ has the law of the stationary measure,
and $\dist_{\omega}$ is the shortest-path distance in $V(n)$,
where the length of a path $\gamma$ is $\sum_{x \in \gamma} \omega(x)$.
In other words, the random walk is at most diffusive in the ``conformal metric'' $\dist_{\omega}.$

Suppose one can find such a weight $\omega$ such that $\dist_\omega(x,y) \geq \dist_{G}(x,y)^{1+\epsilon}$ for some $\epsilon > 0$ and all $x,y \in V(n)$.
This yields the bound
\[
   \E[\dist_G(Y_0,Y_t)^2] \leq t^{1/(1+\epsilon)},
\]
suggesting that the random walk is subdiffusive.
We show how to achieve this
by choosing $\omega$ to be a mixture of weights---one per scale---so
that each weight is supported on an appropriately defined backbone at that scale. 

For the reader encountering Markov type for the first time, let us give a bit more intuition.
Note that since $V(n)$ has uniformly bounded vertex degrees, it holds that
\[
   \frac{1}{|V(n)|} \sum_{x \in V(n)} \omega(x)^2 \leq O(1) \iff \E[\dist_{\omega}(Y_0,Y_1)^2] \leq O(1),
\]
yielding the second inequality in \eqref{eq:markovtype1}.

One can consider the first inequality in \eqref{eq:markovtype1} as an analog of the following fact.
Suppose that $\{Z_t\}$ is a reversible Markov chain with finite state space $\{x_1,\ldots,x_n\} \subseteq \R$ and $Z_0$
has the law of the stationary measure $\pi$.
Then for all $t \geq 1$,
\[
   \E |Z_0 - Z_t|^2 \leq t \E |Z_0-Z_1|^2.
\]
This fact can be deduced from the negative correlation inequality
\[
   \E (Z_t-Z_{t-1}) (Z_{t-1}-Z_0) \leq 0,
\]
and it is an exercise (see, e.g., \cite[\S 2]{lmn02}) to show that
\[
   \E (Z_t-Z_{t-1}) (Z_{t-1}-Z_0) = - \left\langle \bm{x}, (I-P)(I-P^{t-1}) \bm{x}\right\rangle_{\pi} \leq 0,
\]
where $\bm{x} = (x_1,\ldots,x_n)$ and $\langle u,v\rangle_{\pi} = \sum_i \pi(x_i) u_i v_i$, and $P$
is the transition matrix of the chain.
Recall that $P$ is self-adjoint with respect to the inner product $\langle \cdot,\cdot\rangle_{\pi}$,
and the latter inequality follows because $(I-P)(I-P^{t-1})$ is positive semidefinite.

Kesten's argument proceeds by showing that the random walk restricted to the backbone has at most diffusive speed in the Euclidean distance,
and that the presence of the bushes means that the walk {\em watched} on the backbone is
asymptotically slower than the walk {\em restricted} to the backbone, and is thus subdiffusive in the Euclidean distance.

If we define the weight $\omega \seteq \left(|V(n)|/|\sfB(n)|\right)^{1/2} \1_{\sfB(n)}$, then
\eqref{eq:markovtype1} asserts directly that the walk started from stationarity in $V(n)$
and watched on the backbone has average displacement $\lesssim \left(t |\sfB(n)|/|V(n)|\right)^{1/2}$ {\em in the graph distance}
after $t$ steps.
When $t \leq n^{O(1)}$ and $|V(n)|/|\sfB(n)| \gtrsim n^{\delta}$ for some $\delta > 0$,
the resulting speed is subdiffusive.

While \eqref{eq:markovtype1} holds for path metrics on planar graphs,
applying the more general theory of Markov type one only loses $(\log t)^{O(1)}$ factors and
the general result stated in \pref{thm:thick-thin} proves subdiffusivity under rather general conditions.
In particular,
this should suffice to imply subdiffusivity of the random walk on an appropriately defined IIC in higher dimensions, as soon as one can establish the existence of a
suitable backbone analogous to the planar case. A non-trivial one-arm exponent seems to be the only missing ingredient in that setting.

\paragraph{The rate of escape}
Let us consider exponents often used to measure the speed
of the random walk.
The first is the {\em speed exponent} associated to the mean-squared displacement:  A value $\beta > 0$ such that
\[
   \E\left[\dist_G(X_0,X_T)^2 \mid X_0=\bm{0}\right] \asymp T^{2/\beta + o(1)} \quad \textrm{as}\quad T \to \infty.
\]
Note that $\beta = 2$ corresponds to standard diffusive speed.

The second is often called the {\em walk dimension}:  For $R \geq 0$, denote
the random time
\[\tau_R \seteq \min \{ t \geq 0 : \dist_G(X_0, X_t) \geq R \},\]
and suppose there
is a value $d_w$ such that
\[
   \E[\tau_R \mid X_0=\bm{0}] \asymp R^{d_w+o(1)} \quad \textrm{as} \quad R \to \infty.
\]
For concreteness, we define
\begin{align*}
   \underline{d}_w &\seteq \liminf_{R \to \infty} \frac{\log \E[\tau_R]}{\log R}, \\
   \underline{\beta} &\seteq \liminf_{t \to \infty} \frac{2 \log t}{\log \E[\dist_G(X_0,X_t)^2]} \\
   \underline{\beta}^* &\seteq \liminf_{n \to \infty}\frac{2 \log n}{\log \E[\max_{1 \leq t \leq n} \dist_G(X_0,X_t)^2]}.
\end{align*}
While $\underline{d}_w$ and $\underline{\beta}$ are incomparable in general, it holds that
$\min\left(\underline{\beta},\underline{d}_w\right) \geq \underline{\beta}^*$.
To see that $\underline{d}_w \geq \underline{\beta}^*$, denote $\cM_n \seteq \max_{1 \leq t \leq n} d_G(X_0,X_t)$, and note that
\[
   \E\left[\cM_n^2\right] \geq \Pr(\tau_R \leq n) R^2 \geq \1_{\{\E[\tau_R] \leq n/2\}} \frac12 R^2,
\]
hence if $\E[\cM_n^2] \leq n^{2/\underline{\beta}^*+o(1)}$ as $n \to \infty$, then
$\E[\tau_R] > R^{\underline{\beta}^*-o(1)}$ as $R \to \infty$, implying $\underline{d}_w \geq \underline{\beta}^*$.

Our main result is the strict inequality $\underline{\beta}^* > 2$ for the IIC in dimension two.
In fact it is interesting to bound these quantities
explicitly in terms of the arm probabilities defined as $\pi_1(n)$
and $\pi_2(n)$, respectively.
Let $ \eta_1,\eta_2, \eta_{21} > 0$ be such that, as $n \to \infty$,
\begin{align*}
   \pi_1(n) &\leq n^{-\eta_1+o(1)} \\
   \pi_2(n) &\leq n^{-\eta_{21}+o(1)} \pi_1(n)\le n^{-\eta_2+o(1)}.
\end{align*}
Note that by \pref{thm:kesten-arm}, one can take $\eta_{21}\ge \eta_1.$
Further, if $\pi_1(n) \asymp n^{-\eta_1+o(1)}$ and $\pi_2(n) \asymp n^{-\eta_2+o(1)}$,
we could take $\eta_{21} = \eta_2 - \eta_1$.

\begin{theorem}\label{thm:speed-iic}
   For every $\delta$ satisfying
   \[
      \delta < \frac{\eta_{21}}{2-\eta_1},
   \]
   it holds that
   \[
      \E\left[\max_{1 \leq t \leq n} \dist_G(X_0,X_t)^2\right] \leq n^{1/(1+ \delta/2)+o(1)}.
   \]
\end{theorem}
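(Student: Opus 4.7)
We verify the hypotheses of the general subdiffusivity criterion \pref{thm:thick-thin} for the IIC, with the arm exponents entering through Kesten's estimate \pref{thm:kesten-arm} and its quasi-multiplicative refinements. The underlying machinery is the weighted Markov-type inequality \eqref{eq:markovtype1}: one constructs a conformal weight $\omega \colon V(N)\to \mathbb{R}_+$ on a confinement box $V(N)=\cS(N)\cap V(G)$, with $N$ a small polynomial in the time horizon $n$ so that the walk remains inside $V(N)$ up to time $n$ with probability $1-o(1)$, and then applies the Markov-type bound to the walk restricted to $V(N)$.

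The weight $\omega$ is built as a mixture over dyadic scales, with each scale $2^k$ contributing a term supported on a \emph{local scale-$2^k$ backbone} $\sfB_k \subseteq V(N)$---the vertices $v\in V(N)$ admitting two disjoint open paths to the boundary of the box of side $2^k$ centered at $v$. Schematically,
\[
   \omega \defeq \sum_{k=0}^{\lfloor \log_2 N\rfloor} c_k\,\1_{\sfB_k},
\]
where $c_k$ is chosen so that each scale's contribution to $\|\omega\|_2^2/|V(N)|$ is balanced. Kesten's arm estimate together with quasi-multiplicativity gives $\Pr(v\in \sfB_k \mid v\in V(G)) \leq 2^{-k\eta_{21}+o(1)}$, and combined with the fractal volume bound $|V(N)|\asymp N^{2-\eta_1+o(1)}$, this controls the $L^2$ size of $\omega$ up to a polylogarithmic factor. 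This is the ``thin backbone'' half of the hypothesis of \pref{thm:thick-thin}.

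The deeper ingredient is the ``deep patch'' half: one must establish the pointwise inequality $\dist_\omega(x,y)\geq \dist_G(x,y)^{1+\delta/2-o(1)}$ for every $\delta<\eta_{21}/(2-\eta_1)$ and all $x,y\in V(N)$. This is extracted from the geometric observation that any chemical geodesic $\gamma$ of $\dist_G$-length $L$ is forced, at each dyadic scale $2^k \leq L$ (up to cutoffs), to pass through $\sfB_k$ a proportional number of times, because traversing a scale-$2^k$ annulus inside the IIC requires crossing a pivotal two-arm vertex. Summing the contributions $c_k\cdot |\gamma\cap \sfB_k|$ across scales against the density bound above yields the weighted-distance inequality. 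The factor $(2-\eta_1)$ in the denominator of the admissible $\delta$ arises when converting graph-distance statements into Euclidean-distance statements through the fractal volume growth $|V(N)|\asymp N^{2-\eta_1 + o(1)}$.

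With both properties in hand, \eqref{eq:markovtype1} yields $\E[\dist_G(Y_0,Y_t)^{2+\delta-o(1)}]\leq N^{o(1)}\cdot t$ for the walk $\{Y_t\}$ on $V(N)$ started from stationarity, and Jensen's inequality gives $\E[\dist_G(Y_0,Y_t)^2]\leq t^{1/(1+\delta/2)+o(1)}$. A standard reversibility argument (running the chain for $2t$ steps and using invariance of the stationary distribution) transfers the estimate to the walk started at the origin $X_0=\bm{0}$, and a maximal/union-bound argument over a geometric mesh of times promotes the pointwise-in-$t$ bound to the stated bound on $\max_{1\leq t\leq n}\dist_G(X_0,X_t)^2$. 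The main obstacle is the deterministic lower bound on $|\gamma\cap \sfB_k|$, uniform over chemical geodesics and scales: while arm exponents readily control the density of $\sfB_k$ in expectation, turning this into an almost-sure forcing along every geodesic requires the full strength of quasi-multiplicativity together with a geometric argument showing that two-arm pivotals cannot be avoided when crossing scale-$2^k$ annuli inside the cluster.
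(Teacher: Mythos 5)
Your high-level strategy --- a multiscale conformal weight supported on backbones, fed into a Markov-type inequality --- is indeed the paper's strategy, but the two load-bearing steps are where your proposal either misidentifies the mechanism or leaves a genuine gap. First, the lower bound $\dist_{\omega}(x,y)\gtrsim \dist_G(x,y)^{1+\delta/2}$: you flag ``two-arm pivotals cannot be avoided when crossing scale-$2^k$ annuli'' as the main obstacle and propose to resolve it with quasi-multiplicativity, but that is not how the forcing works (a geodesic crossing an annulus need not meet any pivotal vertex), and in fact no probabilistic input is needed here at all. The actual mechanism is a deterministic pigeonhole of chemical length against patch volume: one only invokes scale $k$ for pairs with $\dist_G(x,y)\geq D_k \asymp 2^{k(2-\eta_1+\e)}$, i.e., exceeding three times the typical number of cluster vertices in a scale-$2^k$ patch. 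Any vertex $z$ on a simple $x$--$y$ path at chemical distance greater than $D_k/3$ from both endpoints splits the path into two halves, each too long to fit inside the padded patch $S_z \supseteq B_{\base}(z,\e 2^k)$ (unless that patch is atypically large, an event controlled separately via Kesten's volume tail bound and given its own weight $\omega_1$); both halves must therefore exit $S_z$, which is exactly the two-disjoint-paths definition of $z\in\sfB_G[S_z]$. This is also where the denominator $2-\eta_1$ comes from --- it is the threshold at which chemical length beats patch volume --- not from a conversion between graph and Euclidean distance.

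Second, the density bound. You assert $\Pr(v\in\sfB_k\mid v\in V(G))\leq 2^{-k\eta_{21}+o(1)}$ together with a deterministic volume asymptotic $|V(N)|\asymp N^{2-\eta_1+o(1)}$. Neither is available in this form for the IIC: the law is conditioned on the origin's connection to infinity, so the probability that a far-away vertex is a two-arm point of the cluster is not an unconditional arm probability, and the box volume only satisfies tail bounds, not an almost-sure asymptotic. The paper instead obtains $\Pr[X_0\in{\sfB}^{(k)}_{\circ}]\lesssim (\pi_2(2^k)/\pi_1(2^k))(\log 2^k)^8$ by a mass-transport argument on the unimodular IIC, converting the root-in-backbone probability into $\E\bigl[|\sfB_G[D]|/|D_G|\bigr]$ over deep patches $D$ of a randomly shifted dyadic covering, and then combining Kesten's expectation bound $\E|\sfB_G[D]|\lesssim q^2\pi_2(q)$ with his lower-tail bound $|D_G|\gtrsim q^2\pi_1(q)/\lambda$ on the event that $D$ is deep. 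The same unimodular framework is what permits applying the maximal Markov type theorem directly to the infinite rooted graph, sidestepping both your finite-box restriction and the unresolved transfer from the box's stationary measure back to the walk started at the origin. Without the pigeonhole lemma and the mass-transport density estimate, the argument as written does not close.
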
 
\pref{thm:kesten-arm} gives
a choice of $\eta_{21},\eta_1 > 0$ such that
\[
   \frac{\eta_{21}}{2-\eta_1} > \frac{\eta_{21}}{2} > 0,
\]
showing that the speed of random walk is indeed subdiffusive.

Lower bounds have been given previously for the Euclidean analog $\underline{d}_w^{\mathrm{Euc}}$,
where one considers the random time $\tau^{\mathrm{Euc}}_R \seteq \min \{ t \geq 0 : \|X_t-X_0\|_2 \geq R \}$.
In \cite{DHP13}, it is reported that Kesten's argument yields
\[
   \underline{d}_w^{\mathrm{Euc}} \geq 2 + \frac{\eta_1^2}{4}\,,
\]
and this estimate is improved (see \cite[Rem. 1]{DHP13}) to $\underline{d}_w^{\mathrm{Euc}}\ge 2+\frac{1}{2}\eta_1\eta_2,$  and further, under the assumption of the existence of the one and two arm exponents, to 
\begin{equation}\label{eq:dhp}
   \underline{d}_w^{\mathrm{Euc}} > 2+\frac{\eta_2\eta_{21}}{2}\,.
\end{equation}
Even for the Euclidean exponent, our argument yields the improved bound
\begin{equation}\label{eq:dw-lb}
   \underline{d}_w^{\mathrm{Euc}} \geq \underline{d}_w \geq \underline{\beta}^* \geq 2+\frac{\eta_{21}}{2-\eta_1}> 2+\frac{\eta_2\eta_{21}}{2}\ge 2+\frac{\eta_1\eta_2 }{2}.
\end{equation}
Here the fourth inequality follows since $\eta_2\le 1$ and $\eta_1>0$ by \pref{thm:kesten-arm}.
Further for comparison, using the conjectured values $\eta_1 = 5/48$ and $\eta_2 = \eta_1+1/4$,
the lower bound in \eqref{eq:dhp} is $2+17/384 \approx 2.044$, whereas the improved bound in \eqref{eq:dw-lb} gives $2+12/91\approx 2.132$. 

\pref{thm:speed-iic} is in fact a consequence of the much more general \pref{thm:thick-thin} we state later.

\subsection{Preliminaries}

We will consider primarily connected, undirected graphs $G=(V,E)$,
which we equip with the associated path metric $\dist_G$.
We will write $V(G)$ and $E(G)$, respectively, for the vertex and edge
sets of $G$.
For $v \in V$, let $\deg_G(v)$ denote the degree of $v$ in $G$.
For $v \in V$ and $r \geq 0$, we use $B_G(v,r) = \{ u \in V : d_G(u,v) \leq r\}$
to denote the closed ball in $G$.
For a subset $S \subseteq V(G)$, we write $G[S]$ for the subgraph induced on $S$.
For two subsets $S,T \subseteq V(G)$, write $S \conn{G} T$ if there is
a path in $G$ from $S$ to $T$.
For a subset $S \subseteq V(G)$, define the (inner) vertex boundary
\[
   \partial_G S \seteq \{ v \in S : \exists \{u,v\} \in E(G), u \notin S \}.
\]

If $\omega : V(G) \to \R_+$, we define a length functional $\len_{\omega}$
on edges $\{x,y\} \in E(G)$ via $\len_{\omega}(\{x,y\}) \seteq (\omega(x)+\omega(y))/2$,
and extend this additively to arbitrary paths $\gamma$ in $G$:
\[
   \len_{\omega}(\gamma) \seteq \sum_{e \in \gamma} \len_{\omega}(e)\,.
\]
Finally, for $x,y \in V(G)$, define
\[
   \dist_{\omega}(x,y) \seteq \inf \left\{ \len_{\omega}(\gamma) : \gamma \textrm{ is an $x$-$y$ path in $G$} \right\}.
\]

For two expressions $A$ and $B$, we use the notation $A \lesssim B$ to denote
that $A \leq C B$ for some {\em universal} constant $C$.
We write $A \asymp B$ for the conjunction $A \lesssim B \wedge B \lesssim A$.

\subsection{Unimodular random networks}

It will be important for us to envision the IIC as a unimodular random graph.
Let $\cG_{\bullet}$ denote the set of isomorphism classes of locally-finite, rooted graphs.
A {\em (rooted) network} is a pair $(G,\rho) \in \cG_{\bullet}$,
along with a set of marks $\Psi : V(G) \cup E(G) \to \Xi$, where $\Xi$ is a complete, separable metric space.
A {\em unimodular random network} is a random triple $(G,\rho,\Psi)$ that satisfies the following mass-transport principle.
For every nonnegative, automorphism-invariant Borel functional $F(G,x,y,\Psi)$ on doubly-rooted, marked networks:
\[
   \E\left[\sum_{x \in V} F(G,\rho,x,\Psi)\right] = \E\left[\sum_{x \in V} F(G,x,\rho,\Psi)\right].
\]
By {\em automorphism-invariant}, we mean that for any automorphism $\f$ of $G$, it holds that
$F(\f(G),\f(x),\f(y),\Psi \circ \f^{-1}) = F(G,x,y,\Psi)$.
We refer to the extensive monograph \cite{aldous-lyons}.

A {\em stationary random network} is a random triple $(G,X_0,\Psi)$, where $X_0 \in V(G)$,
and such that $(G,X_0,\Psi)$ and $(G,X_1,\Psi)$ have the same law, where $\{X_n\}$ is the random walk on $G$.
A {\em reversible random network} is a stationary random network in which $(G,X_0,X_1,\Psi)$
and $(G,X_1,X_0,\Psi)$ have the same law.
In some cases, these three notions are related.

The following lemma is due to R. Lyons (see \cite[Thm 3.3]{glp17}); it asserts that stationary random networks of subexponential growth are reversible.
\begin{lemma}
   \label{lem:lyons}
   If $(G,X_0, \Psi)$ is a stationary random network such that
   \[
      \lim_{n \to \infty} \frac{\E[\log |B_G(X_0,n)|]}{n} = 0,
   \]
   then $(G,X_0,\Psi)$ is reversible.
\end{lemma}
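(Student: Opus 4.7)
My plan is to parameterize the failure of reversibility by a degree-ratio cocycle along the random walk, and then use the subexponential volume growth hypothesis to force this cocycle to be trivial.

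First, I would reduce reversibility to the single-step identity
\[
   (G, X_0, X_1, \Psi) \stackrel{d}{=} (G, X_1, X_0, \Psi).
\]
Extending the stationary trajectory to a two-sided stationary process $(X_k)_{k \in \mathbb{Z}}$, reversibility of the trajectory is exactly the equality of the forward and backward transition kernels, which by the Markov property of the walk follows from this single-step identity applied to successive pairs.

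Next, I would compute the Radon--Nikodym derivative of the reversed length-$n$ trajectory law $\nu_n'$ with respect to the forward length-$n$ trajectory law $\nu_n$. Using the explicit walk kernel $K(x, \cdot) = \deg(x)^{-1} \sum_{y \sim x} \delta_y$ and stationarity of the $X_k$-marginal in $k$, a telescoping calculation identifies, on the support of $\nu_n$,
\[
   \frac{d\nu_n'}{d\nu_n}(G, \Psi, x_0, \ldots, x_n) = \frac{\deg(x_n)}{\deg(x_0)},
\]
where the rooted-network dependence cancels because $(G, \Psi, X_0)$ and $(G, \Psi, X_n)$ have the same law.

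Finally, I would invoke the volume-growth hypothesis. Since $X_n \in B_G(X_0, n)$, the bound $\deg(X_n) \leq |B_G(X_0, n+1)|$ (and similarly for $X_0$) gives
\[
   \E\left|\log \tfrac{\deg(X_n)}{\deg(X_0)}\right| \leq 2\, \E \log |B_G(X_0, n+1)| = o(n).
\]
Combined with stationarity of the sequence $\log \deg(X_k)$ (so that its Birkhoff sums along the walk are a telescoping difference of stationary random variables), this sublinear bound, via a cocycle/coboundary or relative-entropy argument between $\nu_n$ and $\nu_n'$, forces the log-cocycle to be trivial, hence $\nu_1 = \nu_1'$, which is the desired reversibility.

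The main obstacle will be the last step: promoting the sublinear $L^1$ bound on the telescoped cocycle to the exact identity $\nu_1 = \nu_1'$. The cleanest route I see is to view $f(G, \Psi, x, y) = \log(\deg(y)/\deg(x))$ as an automorphism-invariant cocycle on the walk's orbit and observe that, by stationarity, its mean along orbits vanishes; the volume-growth hypothesis then rules out any nontrivial coboundary (the partial sums are $o(n)$ almost surely while a genuine coboundary would need to track unbounded fluctuations), forcing $f$ to be zero as a measurable function on orbits, which by the cocycle formula above is equivalent to single-step reversibility. Equivalently, the framework of unimodular random networks and the degree-biasing bijection between reversible and unimodular laws allow this to be packaged as: subexponential growth forces the density $\deg(X_0)/\E\deg(X_0)$ relating the stationary law to its unimodular counterpart to be deterministically constant along the walk.
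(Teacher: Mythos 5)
The paper does not actually prove \pref{lem:lyons}; it quotes it from \cite[Thm 3.3]{glp17}, so your proposal must be judged against the standard argument of Lyons rather than against an in-paper proof. Unfortunately the central computation in your outline is wrong. The Radon--Nikodym derivative of the reversed trajectory law $\nu_n'$ with respect to the forward law $\nu_n$ is \emph{not} $\deg(x_n)/\deg(x_0)$. Writing the forward law as $\mu(dg,d\psi,dx_0)\prod_i p_g(x_{i-1},x_i)$ and the reversed law as $\mu(dg,d\psi,dx_n)\prod_i p_g(x_i,x_{i-1})$, the transition ratios telescope to $\deg(x_0)/\deg(x_n)$, but there remains the factor comparing ``$\mu$ rooted at $x_n$'' to ``$\mu$ rooted at $x_0$''--- the re-rooting Radon--Nikodym cocycle $\Delta(x_0,x_n)$ of the stationary law on rooted networks. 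This factor does not cancel merely because $(G,\Psi,X_0)$ and $(G,\Psi,X_n)$ have the same law: equality of two pushforward marginals is far weaker than pointwise triviality of the density relating the re-rooted measures. Two sanity checks expose the error. First, if your formula held, then $\E\log\frac{d\nu_1'}{d\nu_1}=\E\log\deg(X_1)-\E\log\deg(X_0)=0$ by stationarity; since this expectation equals $-D(\nu_1\,\|\,\nu_1')\le 0$ with equality iff $\nu_1=\nu_1'$, \emph{every} stationary random network with $\E\log\deg(X_0)<\infty$ would be reversible and the volume-growth hypothesis would never be used --- yet stationary non-reversible random networks exist (necessarily of exponential growth, which is exactly why the hypothesis is there). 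Second, for a genuinely reversible network the true derivative is identically $1$, whereas your formula gives $\deg(x_1)/\deg(x_0)\neq 1$ on any non-regular graph. The root of the confusion is conflating reversibility of simple random walk on a \emph{fixed} graph with respect to its degree measure (always true, and where the degree ratios do telescope away) with reversibility of the stationary random \emph{rooted network}, which is the nontrivial content of the lemma.

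The final ``cocycle/coboundary'' paragraph inherits the same confusion: $f(x,y)=\log(\deg(y)/\deg(x))$ is literally the coboundary of $\log\deg$ and is certainly not forced to vanish, so ``forcing the log-cocycle to be trivial'' cannot be the right conclusion. In the correct argument, the object to control is the re-rooting cocycle $\Delta$: one shows that non-reversibility makes $D(\nu_1\,\|\,\nu_1')=c>0$, that by the chain structure of the walk this forces $\log\Delta(X_0,X_n)$ (equivalently, the log-likelihood ratio of the forward versus reversed path) to grow linearly in $n$ in expectation, and that on the other hand the total $\Delta$-mass of $B_G(X_0,n)$ is controlled, so linear growth of $\log\Delta$ forces $\E\log|B_G(X_0,n)|\gtrsim cn$. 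That is precisely where the hypothesis $\E\log|B_G(X_0,n)|=o(n)$ enters, and it plays no effective role in your outline. As written, the proposal proves (if its steps were valid) a statement strictly stronger than the lemma and known to be false.
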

When $\E[\deg_G(\rho)] < \infty$, unimodular random graphs and reversible random graphs are equivalent,
up to biasing the measure by the degree of the root.
\begin{lemma}[\cite{bc12}]
   \label{lem:bc12}
   Consider a random network $(G,\rho,\Psi)$ with $\E[\deg_G(\rho)] < \infty$.
   Then $(G,\rho,\Psi)$ is unimodular if and only if the network $(\tilde{G},\tilde{\rho},\tilde{\Psi})$ is reversible,
   where the law of $(\tilde{G},\tilde{\rho},\tilde{\Psi})$ is that of $(G,\rho,\Psi)$ biased by $\deg_G(\rho)$.
\end{lemma}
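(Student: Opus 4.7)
The plan is to translate the two conditions into each other through a direct computation linking the mass-transport identity on edge-supported test functionals with the reversibility of one step of the random walk on the biased measure. Write $\mu$ for the law of $(G,\rho,\Psi)$ and $Z \seteq \E_{\mu}[\deg_G(\rho)]$, finite by hypothesis, so that $\mathrm{d}\tilde\mu/\mathrm{d}\mu = \deg_G(\rho)/Z$. By definition, $(\tilde G,\tilde\rho,\tilde\Psi)$ is reversible iff the triples $(\tilde G,\tilde X_0,\tilde X_1,\tilde\Psi)$ and $(\tilde G,\tilde X_1,\tilde X_0,\tilde\Psi)$ agree in law, where $\tilde X_0 \seteq \tilde\rho$ and $\tilde X_1$ is one step of the walk from $\tilde X_0$.

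For the $(\Rightarrow)$ direction, let $F$ be a bounded, automorphism-invariant functional of a doubly-rooted marked network. Unpacking the degree bias and the walk transition,
\[
   Z \cdot \E_{\tilde\mu}\bigl[F(\tilde G,\tilde X_0,\tilde X_1,\tilde\Psi)\bigr] = \E_{\mu}\biggl[\sum_{y \sim_G \rho} F(G,\rho,y,\Psi)\biggr],
\]
since the $\deg_G(\rho)$ in $\mathrm{d}\tilde\mu/\mathrm{d}\mu$ cancels the $1/\deg_G(\rho)$ in the transition kernel. Now apply the mass-transport principle to the edge-supported functional $H(G,x,y,\Psi) \seteq \1_{\{x,y\} \in E(G)} F(G,x,y,\Psi)$; the right-hand side becomes $\E_{\mu}[\sum_{y \sim \rho} F(G,y,\rho,\Psi)] = Z \cdot \E_{\tilde\mu}[F(\tilde G,\tilde X_1,\tilde X_0,\tilde\Psi)]$. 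Stationarity follows by specializing $F$ to ignore the second root, yielding full reversibility.

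For the $(\Leftarrow)$ direction, I would read the same computation in reverse: reversibility of $(\tilde G,\tilde\rho,\tilde\Psi)$ is exactly the statement that the mass-transport identity holds for every edge-supported $H$. To promote this to MTP for an arbitrary nonnegative, automorphism-invariant $H(G,x,y,\Psi)$, decompose $H$ hop by hop along canonical shortest paths: using the marks $\Psi$ (together with local root labels) to break ties, fix a Borel rule assigning to each ordered pair $(x,y)$ the first edge of a distinguished shortest $x$-$y$ path, and iterate the edge-supported MTP along successive hops. Since any transport can be rewritten as a finite sum of edge transports (truncating by distance and passing to the limit when $H$ is unbounded in range), the identity extends to arbitrary $H$.

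The main obstacle is precisely this last extension: reversibility directly controls transports across a single edge, whereas the full MTP allows arbitrarily long-range transports. The cleanest way I would resolve this is via the two-sided stationary walk trajectory $(\tilde X_t)_{t \in \Z}$ guaranteed by reversibility of $\tilde\mu$; this furnishes a rerooting-invariant coupling between $\tilde\rho$ and all other vertices and allows one to verify MTP directly through the Palm-calculus arguments that appear in Aldous--Lyons. Ensuring that the canonical path decomposition is genuinely automorphism-invariant (so that the rewriting does not implicitly break symmetries of $(G,x,y,\Psi)$) is the only delicate bookkeeping in the argument.
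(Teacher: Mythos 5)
The paper gives no proof of this lemma; it cites \cite{bc12} (and \cite{glp17}) and merely remarks that the argument for rooted graphs carries over verbatim to networks. So the comparison is with the standard argument there and in \cite{aldous-lyons}. Your $(\Rightarrow)$ direction reproduces it correctly: the degree bias cancels the $1/\deg_G(\rho)$ in the walk kernel, and applying the mass-transport principle to the edge-supported functional $\1_{\{x,y\}\in E(G)}F(G,x,y,\Psi)$ yields equality in law of $(\tilde G,\tilde X_0,\tilde X_1,\tilde\Psi)$ and $(\tilde G,\tilde X_1,\tilde X_0,\tilde\Psi)$; stationarity is the special case of an $F$ depending only on the first root.

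The gap is in the $(\Leftarrow)$ direction, and it sits exactly where you flag it. Reversibility of the biased network gives the MTP only for edge-supported transports (``involution invariance''), and the nontrivial content of the lemma is that this implies the full MTP. Your proposed mechanism --- selecting, for each ordered pair $(x,y)$, a single distinguished shortest path via a Borel tie-breaking rule based on the marks and ``local root labels'' --- does not work in general: the lemma must hold for arbitrary marks, including constant ones, and on a graph with nontrivial symmetries (say a transitive graph with trivial marks) no automorphism-invariant selection of one geodesic, or even of one first edge, exists. The standard repair, which is what the proof of \cite[Prop.~2.2]{aldous-lyons} actually does, is to avoid selection entirely: decompose $F=\sum_{k\ge 0}F\cdot\1_{\{\dist_G(x,y)=k\}}$ and, for a distance-$k$ transport, push the mass $F(G,x,y)$ from $x$ \emph{equally} onto all neighbors $z$ of $x$ with $\dist_G(z,y)=k-1$, retaining $y$ as a tag. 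Each hop is an edge transport to which involution invariance applies, the intermediate functional is a distance-$(k-1)$ transport, and one closes the induction using Tonelli (nonnegativity of $F$ is what licenses all the interchanges of summation and expectation). Your fallback appeal to the two-sided stationary trajectory and ``Palm-calculus arguments'' gestures at the right literature but is not an argument; as written, the converse direction is not established.
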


\begin{remark}Although \cite[Thm 3.3]{glp17} and \cite [Prop. 2.5]{bc12} only state the above result only for random rooted graphs, the proofs works verbatim for networks. 
\end{remark}

In particular, \pref{lem:bc12} provides an equivalent mass-tranport principle for reversible random networks $(G,X_0,\Psi)$:
For every nonnegative, automorphism-invariant Borel functional $F(G,x,y,\Psi)$:
\begin{equation}\label{eq:mtp0}
      \E\left[\frac{1}{\deg_G(X_0)} \sum_{y \in V(G)} F(G,X_0,y,\Psi)\right]
      =
      \E\left[\frac{1}{\deg_G(X_0)} \sum_{y \in V(G)} F(G,y,X_0,\Psi)\right].
\end{equation}

\section{Subdiffusivity of the random walk}

\subsection{The thin backbone condition}
\label{sec:thin-backbone}

We first introduce some notions that will allow us to
quantify the condition that paths at a given scale
travel along an asymptotically small set of vertices (the ``backbone'').

\begin{definition}[Covering systems]
   \label{def:csystem}
Consider a connected graph $\base$
and a collection of finite subsets $\cC$ covering $V(\base)$.
Say that $\cC$ is {\em $\Delta$-bounded} if $\diam_{\base}(S) \leq \Delta$ for every $S \in \cC$.
Say that $\cC$ is {\em $\alpha$-padded} if, for every $v \in V(\base)$, there is some
$S \in \cC$ such that $B_{\base}(v,\alpha) \subseteq S$.
Denote by
\[
   \mult(\cC) \seteq \sup \left\{ \# \{ S' \in \cC : S \cap S' \neq \emptyset \} : S \in \cC \right\}
\]
the {\em intersection multiplicity of $\cC$.}
In other words, $\mult(\cC)$ is the smallest number $m$ such that every set $S \in \cC$
intersects at most $m$ other sets in $\cC$.

A {\em covering system of $\base$ (with scale parameter $M > 1$)} is a collection
$\mathscr{C} = \llangle \cC^{(k)} : k \geq 1 \rrangle$ of coverings of $V(\base)$ such that 
\begin{enumerate}
   \item $\sup \{ \mult(\cC^{(k)}) : k \geq 1 \} < \infty$
   \item For every $k \geq 1$, $\cC^{(k)}$ is $M^k$-bounded.
\end{enumerate}
Say that a covering system $\sC$ is {\em uniformly $\e$-padded} if $\cC^{(k)}$ is $\e M^k$-padded for each $k \geq 1$.
\end{definition}

\begin{example}\label{example:z2}
As a motivating example,
consider the covering system for $\mathbb{Z}^2$ that we employ in \pref{sec:iic}:
For every integer $k \geq 1$, define the covering
\[
   \cC^{(k)} \seteq \left\{\left((\sigma_1,\sigma_2) 2^{k-2} + \left[i 2^{k-1}, (i+1)2^{k-1}\right] \times \left[j 2^{k-1},(j+1)2^{k-1}\right] \cap \Z^2\right) : i,j \in \Z,
   \sigma_1,\sigma_2 \in \{0,1\}\right\},
\]
and take $\sC \seteq \llangle \cC^{(k)} : k \geq 1 \rrangle$.
One can check that each $\cC^{(k)}$ is $2^k$-bounded and $\mult(\cC^{(k)}) \leq 10$, and moreover
the system $\sC$ is uniformly $1/4$-padded.
\end{example}

We will work with random rooted subgraphs of a given base graph $\base$, as well as a random covering of the latter with bounded intersection multiplicity. We next introduce a marking of $\base$ which encodes all the above.

\begin{definition}[Markings from coverings]
   \label{def:marking}
   Let $\{U_v : v \in V(\base) \}$ be a family of i.i.d. uniform $[0,1]$ random variables.
Now for any  cover $\cC$ of $V(\base)$ with $\mult=\mult(\cC) < \infty$,
 for every $S\in C,$ let $$U_S=\sum_{v\in S}U_v.$$  

Thus, almost surely, $U_{S}$ is distinct for each distinct finite subset $S\in \cC$.
 Now for any $v$, let $U_{v,1}, U_{v,2},U_{v,3}\ldots U_{v,\mult}$, be the values $\{U_S:S\in \cC, v\in S\}$ arranged in non-decreasing order,
 with $U_{v,j}=1$ if $v$ occurs in fewer than $j$ sets in $\cC$.
Define now a marking $\psi_{\cC}$ on $V(\base)$ by
\begin{align*}
      \psi_{\cC}(v) &\seteq \llangle U_{v,1}, U_{v,2}, \ldots U_{v,\mult}\rrangle \text{ for } v \in V(\base). 
\end{align*}
By the distinctness of $U_{S}$ mentioned above, one can reconstruct the covering $\cC$ from  $\psi_{\cC}$ almost surely.

Suppose now that $G$ is a subgraph of $V(\base)$ with $\xi = \xi_G : V(\base) \cup E(\base) \to \{0,1\}$ defined
by $\xi^{-1}(1) = V(G) \cup E(G)$.
For a covering system $\sC = \llangle \cC^{(k)} : k \geq 1 \rrangle$,
 we define a marking $\Psi_{G,\sC}$ on $V(\base)\cup E(\base)$ by 
\begin{align*}
      \Psi_{G,\sC}(v) \seteq \llangle \xi(v), \psi_{\cC^{(k)}}(v) : k \geq 1 \rrangle \text{ for } v \in V(G), \text{ and }
      \Psi_{G,\sC}(e)\seteq  \xi(e)  \text{ for } e \in E(\base).
\end{align*}
Note that for any $k,$ the marking $\psi_{\cC^{(k)}}$ is constructed using the same set of random variables $\{U_v : v \in V(\base) \}$.
By the above discussion, the subgraph $G$ and the covering system $\sC$ are reconstructible from $\Psi_{G,\sC}$.
\end{definition}

\paragraph{Backbones}
For any $S\subset V(\base),$ define  the {\em backbone} $\mathsf{B}_G[S] \subseteq S$ as
the set of vertices $v \in S$ such that there are paths $\gamma,\gamma'$
in $G[S]$
with $\gamma \cap \gamma' = \{v\}$,
and each of $\gamma$ and $\gamma'$
connect $v$ to $V(G) \setminus S$,
where we abuse notation to denote by $G[S]$ the subgraph induced by $G$ on $V(G)\cap S.$

\begin{figure}[h]
\centering
\includegraphics[width=8cm]{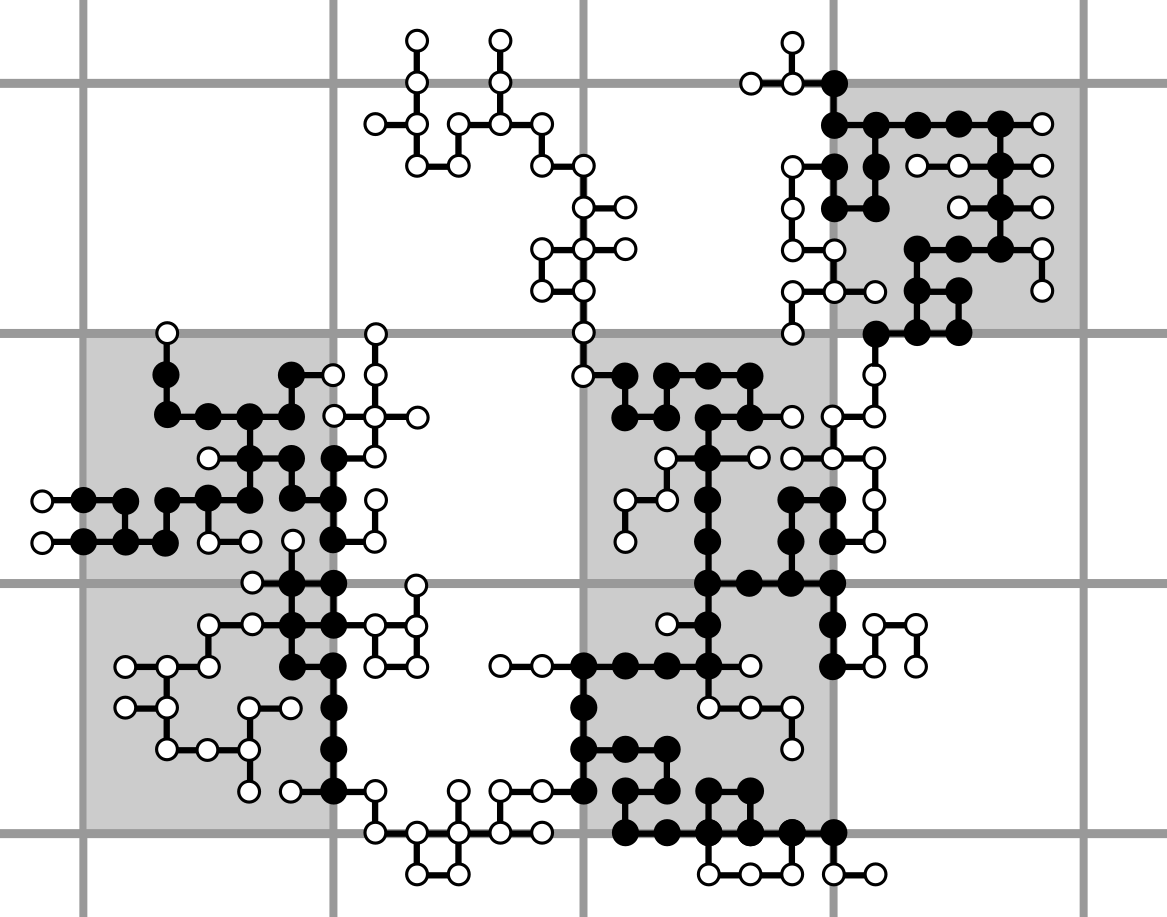}
\caption{A collection of patches.  Deep patches are colored grey, and the backbone
vertices of deep patches are colored black.
\pref{ass:system}(3) asserts that the black vertices are an asymptotically
small fraction of all vertices as $k \to \infty$.\label{fig:patches}}
\end{figure}

\begin{assumption}\label{ass:system}   
   Consider a fixed based graph $\base$, together
   with a random rooted subgraph $(G,X_0)$ of $\base$, and
   a random covering system $\sC$ of $\base$.
   Recall the associated marking $\Psi_{G,\sC}$ from \pref{def:marking}.
   Letting $\{X_n\}$ denote the random walk on $G$,
   we assume that $(\base,X_0,\Psi_{G,\sC})$ is stationary in the sense that
   \begin{equation}\label{eq:base-stationary}
      \left(\base,X_0,\Psi_{G,\sC}\right) \stackrel{\mathrm{law}}{=} 
      \left(\base,X_1,\Psi_{G,\sC}\right).
   \end{equation}
   We record now a straightforward fact.

\begin{fact}[Stationary projection]
   \label{fact:projection}
   If \eqref{eq:base-stationary} holds, then
   for any deterministic, automorphism-invariant,
   real-valued function $\Phi$ on rooted networks, with the marking on the latter living in the same space as  $\Psi_{G,\sC}$,  
the marking $\widehat \Psi$ obtained on $V(G),$ by setting $\widehat\Psi(v) \seteq \Phi((\base, v, \Psi_{G,\sC})),$ makes $(G,X_0, \widehat \Psi)$ into a stationary random network.    
\end{fact}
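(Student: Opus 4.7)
The plan is to exhibit $(G, X_0, \widehat\Psi)$ as the image of $(\base, X_0, \Psi_{G,\sC})$ under a deterministic, automorphism-invariant map between rooted networks, and then transfer stationarity along this map. Since $\widehat\Psi$ is built entirely from $\Phi$ applied to re-rootings of $\Psi_{G,\sC}$, no new randomness or structure is introduced in passing from $\base$ to $G$; the content of the fact is essentially that this passage is automorphism-equivariant.

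First, I would record that by the construction in \pref{def:marking} the subgraph $G$ is reconstructible almost surely from $\Psi_{G,\sC}$ via its $\xi$-component, so there is a deterministic measurable rule extracting $G$ from $(\base, \Psi_{G,\sC})$. Let $\mathcal{F}$ be the map sending a rooted marked network $(\base, v, \Psi)$, with $v$ in the vertex set singled out by the $\xi$-component of $\Psi$, to the rooted network $(G_\Psi, v, \widehat\Psi_\Psi)$, where $G_\Psi$ is the reconstructed subgraph and $\widehat\Psi_\Psi(u) \seteq \Phi\bigl((\base, u, \Psi)\bigr)$ for $u \in V(G_\Psi)$.

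Next, I would verify that $\mathcal{F}$ is well-defined on isomorphism classes. If $\varphi : (\base, v, \Psi) \to (\base', v', \Psi')$ is an isomorphism of rooted marked networks, then $\varphi$ restricts to a graph isomorphism $G_\Psi \to G_{\Psi'}$ since $\xi$ is part of the preserved mark, and the automorphism-invariance of $\Phi$ yields $\Phi((\base, u, \Psi)) = \Phi((\base', \varphi(u), \Psi'))$ for every $u \in V(G_\Psi)$. Hence $\varphi|_{V(G_\Psi)}$ is an isomorphism from $(G_\Psi, v, \widehat\Psi_\Psi)$ to $(G_{\Psi'}, v', \widehat\Psi_{\Psi'})$, so $\mathcal{F}$ descends to isomorphism classes of rooted networks.

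Finally, I would apply $\mathcal{F}$ to both sides of the hypothesis \eqref{eq:base-stationary}. Determinism and measurability of $\mathcal{F}$ let equality in law pass through, giving
\[
   (G, X_0, \widehat\Psi) \;=\; \mathcal{F}(\base, X_0, \Psi_{G,\sC}) \;\stackrel{\mathrm{law}}{=}\; \mathcal{F}(\base, X_1, \Psi_{G,\sC}) \;=\; (G, X_1, \widehat\Psi).
\]
Since $\{X_n\}$ is the random walk on $G$ by construction, this is exactly the stationarity of $(G, X_0, \widehat\Psi)$. The only point requiring any care is the automorphism-invariance of $\mathcal{F}$, which reduces immediately to that of $\Phi$ together with reconstructibility of $G$ from $\Psi_{G,\sC}$; I do not anticipate any genuine obstacle.
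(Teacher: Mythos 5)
Your proof is correct, and it is exactly the argument the paper has in mind: the paper records this as a ``straightforward fact'' with no written proof, the implicit justification being precisely that $\widehat\Psi$ (together with the reconstruction of $G$ from the $\xi$-component of $\Psi_{G,\sC}$) is a deterministic, automorphism-equivariant, measurable function of the rooted network $(\base,\cdot,\Psi_{G,\sC})$, so the equality in law \eqref{eq:base-stationary} pushes forward. Your verification that the map descends to isomorphism classes, via the automorphism-invariance of $\Phi$ and the preservation of $\xi$ under isomorphisms, is the only point of substance and you handle it correctly.
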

Additionally, we assume the following conditions.

\begin{enumerate}
   \item {\bf Polynomial volume growth.}
      There are constants $C_1,d \geq 1$ such that
      \begin{equation}\label{eq:growth}
         |B_{\base}(x,r)| \leq C_1 r^{d}, \qquad \forall x \in V(\base), \forall r \geq 1.
      \end{equation}
      Since $G$ is a subgraph of $\base$, we have $|B_G(x,r)| \leq |B_{\base}(x,r)|$.  
      In particular, \pref{lem:lyons} implies that $(G,X_0,\widehat \Psi)$ is a reversible random network for any marking $\widehat \Psi$ such that $(G,X_0,\widehat \Psi)$ is stationary. 

      Thus \eqref{eq:mtp0} yields the following mass transport principle: 
      For any nonnegative, automorphism-invariant Borel functional $F(G,x,y,\widehat \Psi)$:
      \begin{equation}\label{eq:mtp2}
         \E\left[\frac{1}{\deg_G(X_0)} \sum_{y \in V(G)} F(G,X_0,y,\widehat \Psi)\right] = \E\left[\frac{1}{\deg_G(X_0)} \sum_{y \in V(G)} F(G,y,X_0,\widehat \Psi)\right].
      \end{equation}

   \item {\bf Padded covering system.}
      For some $\e > 0$, $\sC$ is almost surely a uniformly $\e$-padded covering system for $\base$.
   \item {\bf Deep patches have thin backbones.}
For a subset $S \subseteq V(\base)$, recalling that $G[S]$ denotes the induced subgraph of $G$ on $V(G)\cap S,$ define the {\em depth of $S$ in $G$} as the quantity
\begin{equation}\label{eq:depth}
   \tau_G(S) \seteq \max \{ \tau \geq 0 : \exists v \in S \textrm{ with } B_{\base}(v,\tau)
   \subseteq S \textrm{ and } v \overset{G[S]}{\leftrightarrow} \partial_G S\}.
\end{equation}

Now given the random covering system $\sC = \llangle \cC^{(k)} : k \geq 1 \rrangle$,
let us denote by 
\[
   {\cC}^{(k)}_{\circ} \seteq \{ S \in \cC^{(k)} : \tau_G(S) \geq \e M^k \}
\]
      the collection of deep patches, and by
      \begin{align}\label{backboneunion}
         {\sfB}^{(k)}_{\circ} &\seteq \bigcup_{S \in {\cC}^{(k)}_{\circ}} \sfB_G[S]
      \end{align}
      the union of their backbones.  (See \pref{fig:patches}
      for an illustration.)
      We assume that for some $\eta > 0$:
      \[
         \Pr\left[X_0 \in {\sfB}^{(k)}_{\circ}\right] \leq C_4 M^{-\eta k},
      \]
      where we recall $M$ from \pref{def:csystem}.
\end{enumerate}
\end{assumption}

Recall that $\Psi_{G,\sC}$ allows one to reconstruct $\cC^{(k)}$ for any $k,$ as well as the random subgraph $G.$
Thus by \pref{fact:projection}, the following consequence holds.

\begin{lemma}\label{lem:isrev}
   If \pref{ass:system} is satisfied, then $\left(G,X_0,\1_{\sfB^{(k)}_{\circ}}\right)$ is a reversible random network for each $k \geq 1$.
\end{lemma}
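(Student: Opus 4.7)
The plan is to reduce the claim to two ingredients already in hand: \pref{fact:projection}, which transfers stationarity from $(\base,X_0,\Psi_{G,\sC})$ to $(G,X_0,\widehat\Psi)$ for any deterministic, automorphism-invariant readout $\widehat\Psi$, and \pref{lem:lyons}, which upgrades stationarity to reversibility under subexponential volume growth. Concretely, I will exhibit a deterministic, automorphism-invariant real-valued functional $\Phi_k$ on rooted marked networks with $\Phi_k(\base,v,\Psi_{G,\sC}) = \1\{v \in \sfB^{(k)}_\circ\}$ almost surely, feed this into \pref{fact:projection} to obtain stationarity of $(G,X_0,\1_{\sfB^{(k)}_\circ})$, and then invoke \pref{lem:lyons} via the polynomial growth bound from \pref{ass:system}(1).

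The key observation is that $\Psi_{G,\sC}$ was designed so that $G$ and every covering $\cC^{(j)}$ can be read off from it in an equivariant way. The coordinate $\xi$ of $\Psi_{G,\sC}$ is by definition the indicator of $V(G)\cup E(G)$, so $G$ is recovered immediately. For each scale $j$, the sums $U_S = \sum_{v\in S} U_v$ are almost surely pairwise distinct across $S \in \cC^{(j)}$ (since the $U_v$ are i.i.d.\ uniform), so the tuple $\psi_{\cC^{(j)}}(v) = \llangle U_{v,1},\ldots,U_{v,\mult}\rrangle$ lists precisely the ``labels'' of the sets in $\cC^{(j)}$ that contain $v$, and two vertices lie in a common $S\in\cC^{(j)}$ iff their tuples share an entry. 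This recovers the partition-like structure of $\cC^{(j)}$ from $\Psi_{G,\sC}$ alone. Once $G$, $\base$, and $\cC^{(k)}$ are in hand, deciding whether $v\in\sfB^{(k)}_\circ$ is purely combinatorial: for each $S\in\cC^{(k)}$ containing $v$, test whether $\tau_G(S)\geq \eps M^k$ using \eqref{eq:depth}, and if so test the two-disjoint-paths condition in $G[S]$ defining $\sfB_G[S]$. Every step of this algorithm is invariant under isomorphisms of the rooted marked network $(\base,v,\Psi_{G,\sC})$, so the resulting $\Phi_k$ is automorphism-invariant.

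With $\widehat\Psi \seteq \Phi_k(\base,\cdot,\Psi_{G,\sC}) = \1_{\sfB^{(k)}_\circ}$, \pref{fact:projection} yields that $(G,X_0,\1_{\sfB^{(k)}_\circ})$ is stationary. Finally, \pref{ass:system}(1) provides the deterministic bound $|B_G(X_0,n)|\leq |B_\base(X_0,n)|\leq C_1 n^d$, whence $\E[\log|B_G(X_0,n)|]/n \leq (d\log n + \log C_1)/n \to 0$; so \pref{lem:lyons} applies and gives reversibility. I do not anticipate a real obstacle here, since \pref{def:marking} was engineered precisely to make this reconstruction work; the only mild care needed is to define $\Phi_k$ arbitrarily on the measure-zero event where distinct sets of some $\cC^{(k)}$ share a common $U$-sum or where some vertex lies in fewer than $\mult$ sets, neither of which affects the almost-sure identity $\Phi_k = \1_{\sfB^{(k)}_\circ}$.
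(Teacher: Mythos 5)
Your proof is correct and follows the paper's own route exactly: the paper likewise observes that $G$ and $\cC^{(k)}$ are reconstructible from $\Psi_{G,\sC}$, applies \pref{fact:projection} to get stationarity of $(G,X_0,\1_{\sfB^{(k)}_{\circ}})$, and upgrades to reversibility via \pref{lem:lyons} using the polynomial growth bound of \pref{ass:system}(1). You simply spell out the reconstruction and automorphism-invariance in more detail than the paper does.
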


We now state our main general theorem proving subdiffusivity of random walk under the above assumptions.

\begin{theorem}\label{thm:thick-thin}
   If $(G,X_0)$ satisfies \pref{ass:system},
   then the random walk is subdiffusive:  For some constant $C \geq 1$,
   \[
      \E\left[\max_{0 \leq t \leq T} \dist_G(X_0, X_t)^2\right] \leq C T^{1/(1+\eta/2d)} (\log T)^4, \qquad \forall T \geq 2.
   \]
\end{theorem}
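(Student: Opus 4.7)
The strategy is the one sketched in the introduction: construct, at each scale $k$, a weight $\omega_k : V(G) \to \{0,1\}$ so that (i) the walk is at most diffusive in the weighted metric $\dist_{\omega_k}$ by Markov type 2, while (ii) $\dist_{\omega_k}$ dominates $\dist_G$ on pairs at $\dist_G$-distance much larger than $M^{kd}$. I take $\omega_k \seteq \1_{\sfB_\circ^{(k)}}$. By \pref{lem:isrev} the network $(G, X_0, \omega_k)$ is reversible, and \pref{ass:system}(3) gives $\E[\omega_k(X_0)^2] = \Pr[X_0 \in \sfB_\circ^{(k)}] \leq C_4 M^{-\eta k}$. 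Invoking the maximal Markov type 2 inequality for weighted path metrics on stationary random graphs of polynomial growth (as developed in \cite{lee17a}; cf.\ also \cite{dlp13} for the planar case without log losses) then yields, for some absolute constant $C' \geq 1$,
\[
   \E\!\left[\max_{0 \leq s \leq t} \dist_{\omega_k}(X_0, X_s)^2\right] \lesssim t\,(\log t)^{C'}\,\E[\dist_{\omega_k}(X_0, X_1)^2] \lesssim M^{-\eta k}\,t\,(\log t)^{C'},
\]
where in the last step we used stationarity and the edge bound $\dist_{\omega_k}(X_0, X_1) \leq \tfrac12(\omega_k(X_0) + \omega_k(X_1))$.

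The deterministic heart of the argument is the following pointwise lower bound, which I shall call $(\star)$: whenever $\dist_G(x, y) \geq 4 C_1 M^{kd}$,
\[
   \dist_{\omega_k}(x, y) \geq \tfrac14\,\dist_G(x, y).
\]
To prove $(\star)$, let $\gamma' = (v_0, \ldots, v_{r'})$ be any simple path in $G$ from $x$ to $y$ (non-simple paths only increase $\len_{\omega_k}$), and fix a vertex $v \in \gamma'$ with $\min\{\dist_\base(v, x), \dist_\base(v, y)\} > M^k$. By \pref{ass:system}(2) there is some $S \in \cC^{(k)}$ with $B_\base(v, \e M^k) \subseteq S$; since $S$ has $\base$-diameter at most $M^k$, one has $S \subseteq B_\base(v, M^k)$, so both $x$ and $y$ lie outside $S$. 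The two sub-paths of $\gamma'$ emanating from $v$ therefore each exit $S$ at some point, and by simplicity of $\gamma'$ these exit segments are vertex-disjoint except at $v$; they thus furnish the two disjoint paths from $v$ to $V(G) \setminus S$ in $G[S]$ required by the definition of $\sfB_G[S]$, while the inclusion $B_\base(v, \e M^k) \subseteq S$ witnesses that $S$ is deep. Hence $v \in \sfB_\circ^{(k)}$. By polynomial volume growth (\pref{ass:system}(1)), $|B_\base(x, M^k) \cup B_\base(y, M^k)| \leq 2 C_1 M^{kd}$, so at most $2 C_1 M^{kd}$ vertices of $\gamma'$ fail the defining condition and $|\gamma' \cap \sfB_\circ^{(k)}| \geq r' - 2 C_1 M^{kd} \geq \dist_G(x,y)/2$. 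Unpacking $\len_{\omega_k}$ and taking the infimum over $\gamma'$ yields $(\star)$.

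Combining the two estimates, and using the trivial bound $\dist_G(X_0, X_s) \leq 4 C_1 M^{kd}$ on the complementary event, gives
\[
   \E\!\left[\max_{0 \leq s \leq t} \dist_G(X_0, X_s)^2\right] \lesssim M^{2kd} + M^{-\eta k}\,t\,(\log t)^{C'}.
\]
Optimizing in $k$ by choosing $M^k \asymp t^{1/(2d + \eta)}$ balances the two terms and produces the exponent $2d/(2d + \eta) = 1/(1 + \eta/(2d))$; standard accounting of log losses in the maximal Markov type 2 estimate makes $C' = 4$ suffice. The step I anticipate being most delicate is the deterministic lemma $(\star)$: the definition of $\sfB_\circ^{(k)}$ involves an existential ``two vertex-disjoint paths'' condition, and the argument must leverage both the padding of the covering system (to place $v$ in the interior of some patch that is automatically deep) and the simplicity of $\gamma'$ (to manufacture the two vertex-disjoint exits) so that the resulting bound is robust to taking the infimum over \emph{all} paths, not merely geodesics.
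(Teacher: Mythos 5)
Your proof is correct and follows essentially the same route as the paper: the weight is the indicator of the union of backbones of deep patches at scale $k$, the deterministic heart is that every vertex of a simple $x$--$y$ path lying far from both endpoints must belong to that backbone (padding places it deep inside some patch, whose bounded diameter/cardinality forces both sub-paths to exit), and the probabilistic input is the maximal Markov type $2$ estimate of \cite{lee17a} (\pref{thm:mt2}). The only difference is in the final assembly: the paper feeds the per-scale distance lower bound into \pref{cor:conformal}, which merges all scales into a single conformal weight before invoking \pref{thm:mt2} once, whereas you apply the Markov type bound scale-by-scale and optimize $k$ against $T$ at the end---both yield the exponent $1/(1+\eta/2d)$, and your variant even spares a couple of logarithms.
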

As alluded to before, to prove the theorem we rely on the framework developed in \cite{lee17a} which we review next.

\subsection{Conformal weights on stationary random graphs}

Consider a stationary random graph $(G,X_0)$, and let $\{X_t\}$ denote the random walk on $G$.
A {\em conformal weight on $(G,X_0)$} is a random mapping $\omega : V(G) \to \R_+$ such that $(G,X_0,\omega)$
and $(G,X_1,\omega)$ have the same law, and $\E[\omega(X_0)^2] < \infty$.
For establishing subdiffusivity of the IIC in two dimensions,
the following theorem suffices.

\begin{theorem}[\cite{lee17a}]
   \label{thm:mt}
   Suppose that $(G,X_0)$ is a planar, invariantly amenable\footnote{One can find the notion of {\em invariantly amenability} in \cite[Sec 3.2]{ahnr18},
      where it is imported from \cite{aldous-lyons}.
      We will only consider stationary random graphs with an almost sure polynomial upper bound
   on their volume growth, which are easily seen to be invariantly amenable; see, e.g., \cite[Lem. 4.3]{lee17a}.}, stationary random graph.
   Then for any conformal weight $\omega$ on $(G,X_0)$, it holds that for all $T \geq 1$,
   \[
      \E\left[\max_{0 \leq t \leq T} \dist_{\omega}(X_0,X_t)^2\right] \lesssim T \E[\omega(X_0)^2].
   \]
\end{theorem}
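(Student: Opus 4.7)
The plan is to combine three ingredients: (i) a uniform Markov type $2$ bound for weighted planar graph metrics, (ii) invariant amenability of $(G,X_0)$, which renders the walk reversible (via \pref{lem:lyons}) and allows approximation by finite reversible Markov chains, and (iii) the conformal weight condition, which controls the per-step displacement in $\dist_\omega$ directly by $\E[\omega(X_0)^2]$.

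First, I would use invariant amenability to produce an exhaustion of $V(G)$ by random finite Følner sets $F_n$, together with finite reversible Markov chains on $F_n$ whose root-and-environment distribution well approximates the walk on $(G,X_0)$. On each finite $F_n$, the vertex-weighted shortest-path functional $\dist_\omega$ coincides with an edge-weighted planar graph metric after assigning length $(\omega(u)+\omega(v))/2$ to each edge $\{u,v\}$. This puts us in a purely finite, deterministic setting to which classical metric embedding/Markov type theory applies.

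Second, I would invoke the main geometric input: every finite weighted planar graph metric has Markov type $2$ with a \emph{universal} constant $M$. This is the theorem of Ding--Lee--Peres (\cite{dlp13}), extending Ball's classical result for Hilbert space. The \emph{maximal} version of Markov type $2$---valid for reversible chains in any Markov type $2$ space without logarithmic loss---then yields, for the stationary reversible chain on $F_n$,
\[
   \E\!\left[\max_{0 \leq t \leq T} \dist_\omega(X_0, X_t)^2 \mathbin{\Big|} F_n\right] \lesssim T \cdot \E\!\left[\dist_\omega(X_0, X_1)^2 \mathbin{\Big|} F_n\right].
\]
Now bound the right-hand side using the conformal weight property: since $\{X_0, X_1\}$ is an edge of $G$, $\dist_\omega(X_0, X_1) \leq (\omega(X_0) + \omega(X_1))/2$, and the stationarity relation $(G, X_0, \omega) \stackrel{\mathrm{law}}{=} (G, X_1, \omega)$ gives $\E[\omega(X_1)^2] = \E[\omega(X_0)^2]$, so the one-step second moment is $\lesssim \E[\omega(X_0)^2]$. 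Finally, pass to the limit $n \to \infty$ via monotone convergence of the maximum over $t \leq T$, transferring the bound to the infinite random graph $G$.

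The hard part will be the planar Markov type $2$ input itself, which is the technical core of \cite{dlp13}; its proof uses random partitioning schemes for planar graphs with strong separation properties and a nontrivial averaging-and-embedding argument to land the metric in $L_2$ with bounded distortion. A secondary obstacle is the amenability-based approximation step: one must verify that the Markov type $2$ constant is uniform in $|F_n|$ and that the stationary root distribution on $F_n$ converges to the law of $X_0$ well enough for the inequality and its right-hand side to transfer cleanly in the limit. Upgrading pairwise Markov type $2$ to its maximal form without a log factor requires invoking the reversibility of the walk (which is where \pref{lem:lyons}, and hence the volume-growth hypothesis implicit in planar invariant amenability, plays its role).
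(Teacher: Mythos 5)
Your overall architecture coincides with the paper's (which defers to \cite[Thm 5.2, Thm 5.7]{lee17a}): uniform Markov type $2$ for vertex-weighted planar graph metrics from \cite{dlp13}, a finite-approximation/transfer step using invariant amenability, and the one-step bound $\E[\dist_\omega(X_0,X_1)^2]\lesssim \E[\omega(X_0)^2]$ from stationarity of the conformal weight. However, one step is justified by an assertion that is not a known fact. You claim that the \emph{maximal} version of Markov type $2$ is ``valid for reversible chains in any Markov type $2$ space without logarithmic loss.'' There is no general implication from Markov type $2$ to maximal Markov type $2$ with a comparable constant; maximal Markov type is a genuinely stronger property. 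What makes the argument work here---and this is exactly the content of \pref{rem:maximal-mt}---is that the \emph{proof} in \cite{dlp13}, like all arguments built on the forward/backward martingale decomposition of \cite{npss06}, actually bounds the maximal Markov type $2$ constant of weighted planar graphs, as observed in \cite{GH20}. So you must appeal to the method of proof, not to the Markov type $2$ statement itself; your closing remark that the upgrade ``requires invoking the reversibility of the walk'' misplaces the issue, since reversibility of the auxiliary chain is already part of the definition of Markov type.

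A secondary inaccuracy: you obtain reversibility of $(G,X_0)$ from \pref{lem:lyons}, but that lemma requires $\E[\log|B_G(X_0,n)|]=o(n)$, which is not implied by planarity or invariant amenability (the canopy tree is invariantly amenable with exponential volume growth), and no growth hypothesis appears in the statement of \pref{thm:mt}. The paper's route is that an invariantly amenable stationary random graph is a distributional limit of finite stationary random graphs, each of which is reversible, hence $(G,X_0)$ is reversible; since your F{\o}lner-type finite approximation essentially reconstructs such a limit, the fix is available within your framework, but the appeal to \pref{lem:lyons} as written does not apply.
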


   Note that \cite{lee17a} contains the weaker claim
   \[
      \E\left[\dist_{\omega}(X_0,X_T)^2\right] \lesssim T \E[\omega(X_0)^2].
   \]
   But the latter conclusion is derived
   from the fact that weighted planar graph metrics have Markov type
   2 with a uniform constant \cite{dlp13}.
   As observed in \cite{GH20}, the authors of \cite{dlp13} actually bound
   the {\em maximal} Markov type 2 constant, as do all methods
   that establish Markov type bounds using the forward/backward martingale 
   decomposition \cite{npss06}.
   The same observation applies to the next theorem,
   since its proof uses only 
   that the Markov type 2 constant for an $n$-point metric
   space is $O(\log n)$, which also holds for the maximal Markov type 2 constant.
   We give a more detailed explanation of how to obtain these strengthenings
   in \pref{rem:maximal-mt} below.

\begin{theorem}[{\cite[Thm 5.7]{lee17a}}]
   \label{thm:mt2}
   Suppose $(G,X_0)$ is a stationary random graph that
   satisfies, for some $q \geq 1$,
   \begin{equation}\label{eq:annealed-growth}
      \E |B_G(X_0,r)| \lesssim r^q\qquad \forall r \geq 1\,.
   \end{equation}
   Then for any conformal weight $\omega$ on $(G,X_0)$, and any
   $q' \geq 1$, there is a constant $C=C(q,q')$ such that for all $T \geq 2$,
   \[
      \E\left[T^{q'} \wedge \max_{0 \leq t \leq T} \dist_{\omega}(X_0,X_t)^2\right] \leq 1 + C T (\log T)^2 \E[\omega(X_0)^2].
   \]
\end{theorem}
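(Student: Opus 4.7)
The plan is to follow the proof of \cite[Thm 5.7]{lee17a} essentially verbatim, making only the single targeted substitution described in the remark preceding the theorem statement: wherever the original argument invokes the Markov type 2 inequality for an $n$-point metric space---at the decisive step where the Markov type 2 constant is bounded by $O(\sqrt{\log n})$ via the forward/backward martingale decomposition of \cite{npss06}---one instead invokes the \emph{maximal} Markov type 2 inequality. The observation of \cite{GH20} is that the very same martingale decomposition controls the maximal Markov type 2 constant of any $n$-point metric space by $O(\sqrt{\log n})$, so this replacement is valid with no loss in constants. Every other step in \cite{lee17a} is either pointwise in $t$ or monotone in $t$, so it transfers to the supremum over $0 \leq t \leq T$ without alteration.

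Concretely, I would apply the maximal Markov type 2 inequality to the reversible chain $\{X_t\}_{0 \leq t \leq T}$ at its stationary distribution, conditionally on the realization of $G$. Reversibility of $(G,X_0,\omega)$ follows from \pref{lem:lyons}, since the annealed growth bound \eqref{eq:annealed-growth} forces almost-sure subexponential volume growth. The finite state space on which the inequality operates is the (random) set of vertices visited by the walk up to time $T$, whose cardinality is at most $T+1$, so the relevant maximal Markov type 2 constant is $O(\sqrt{\log T})$. Combined with the one-step estimate $\E[\dist_\omega(X_0,X_1)^2] \lesssim \E[\omega(X_0)^2]$---which is immediate from $\len_\omega(\{x,y\}) = (\omega(x)+\omega(y))/2$, the conformal-weight property $(G,X_0,\omega) \stackrel{\mathrm{law}}{=} (G,X_1,\omega)$, and the uniformly bounded degrees of $\base$---this produces a conditional-on-$G$ bound of the expected form $(\log T)\cdot T\cdot\E[\omega(X_0)^2]$.

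The slack between this ``clean'' bound and the stated $(\log T)^2\cdot T\cdot \E[\omega(X_0)^2]$ (together with the $T^{q'}\wedge$ truncation and additive constant $1$) comes from reconciling the conditional Markov type inequality with the annealed growth assumption: the conditional constant depends on the random vertex count $|B_G(X_0, O(T))|$, whereas one ultimately wants a deterministic bound in $T$. Absorbing this randomness through a dyadic scale decomposition (with scales $2^k$ for $k \lesssim q'\log_2 T$) and a Chebyshev-type argument---where the rare event that the annealed growth bound fails to concentrate is handled by the $T^{q'}$ cap and the additive $1$ absorbs low-order terms---costs the additional factor of $\log T$. The main technical obstacle I foresee is executing this decomposition so that it respects the stationarity of $(G,X_0,\omega)$ and the mass-transport principle \eqref{eq:mtp0}: the weight $\omega$ appears inside a conditional expectation against a graph-dependent factor (the log-cardinality of the ball), and rearranging these factors requires the conformal-weight property together with a mass-transport argument. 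This interplay is the heart of the original proof in \cite{lee17a} and the part that transfers most delicately to the maximal setting, but once the maximal Markov type 2 bound of \cite{GH20} is in hand, no further new ideas are needed.
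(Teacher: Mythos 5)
There are two genuine gaps. First, your key quantitative input is wrong as stated: you claim that the forward/backward martingale decomposition of \cite{npss06} shows that \emph{every} $n$-point metric space has (maximal) Markov type $2$ constant $O(\sqrt{\log n})$, attributing this to \cite{GH20}. That is a misreading: the observation in \cite{GH20} concerns the weighted planar graph bound of \cite{dlp13} (relevant to \pref{thm:mt}), and the martingale decomposition applies to Hilbertian or otherwise special targets, not to arbitrary finite metric spaces; for a general $n$-point space no $O(\sqrt{\log n})$ bound is available. The route actually needed (and the one described in \pref{rem:maximal-mt}) is Bourgain's embedding theorem \cite{Bourgain85}, giving an embedding into $\ell_2$ with distortion $O(\log n)$, composed with $M_2^{\max}(\ell_2)=O(1)$ from \cite{npss06}; this yields $M_2^{\max}(X,d)=O(\log n)$, and squaring this constant is exactly where the $(\log T)^2$ in \pref{thm:mt2} comes from. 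Your accounting, which gets one $\log T$ from the Markov type constant and a second $\log T$ from a dyadic decomposition, does not reflect any argument you have actually supplied.

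Second, the way you invoke the Markov type inequality is invalid: you propose applying it to ``the (random) set of vertices visited by the walk up to time $T$,'' of cardinality at most $T+1$. The Markov type inequality quantifies over reversible chains on a \emph{fixed} finite state space with a \emph{fixed} map into a \emph{fixed} metric space; the expectation over trajectories is part of the inequality, so the underlying space cannot be chosen as a function of the trajectory. If that trick were legitimate, the growth hypothesis \eqref{eq:annealed-growth}, the truncation $T^{q'}\wedge$, and the additive $1$ in the conclusion would all be unnecessary, which should have been a warning sign. The correct argument works with the ball $B_G(X_0,T)$ (which deterministically contains $X_0,\dots,X_T$), whose cardinality is random, and the whole point of the hypothesis \eqref{eq:annealed-growth} is to control $\log |B_G(X_0,T)|$ in expectation while decoupling it from $\omega$, with the rare event of an atypically large ball absorbed by the $T^{q'}$ cap. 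Your proposal acknowledges this tension only in passing (and inconsistently with your earlier claim that the constant is a deterministic $O(\sqrt{\log T})$), and the ``dyadic scale decomposition plus Chebyshev'' step that is supposed to resolve it is precisely the part of the proof you have not carried out.
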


The next corollary makes precise the strategy of showing subdiffusivity of the random walk by appropriate choice of the conformal weights.

\begin{corollary}\label{cor:conformal}
   Assume $(G,X_0)$ is a stationary random graph satisfying \eqref{eq:annealed-growth}
   for some $q \geq 1$.  Then there is a constant $C=C(q)$ such that the following holds.
   Suppose that for some $\delta > 0$ and
   every $\tau > 0$, there exists a conformal weight $\omega_{\tau}$ on $(G,X_0)$
   such that $\E[\omega_{\tau}(X_0)^2] \leq 1$ and for all $x,y \in V(G)$,
   \[
      \dist_G(x,y) \geq \tau \implies \dist_{\omega_{\tau}}(x,y) \gtrsim \tau^{1+\delta}.
   \]
   Then for all $T \geq 2$,
   \[
      \E\left[\max_{0 \leq t \leq T} \dist_G(X_0,X_T)^2\right] \leq C T^{1/(1+\delta)} (\log T)^4.
   \]
\end{corollary}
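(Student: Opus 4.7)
The plan is to convert the hypothesized ``conformal distortion'' lower bound, via Theorem~\ref{thm:mt2}, into a tail estimate on $M_T := \max_{0 \le t \le T} \dist_G(X_0, X_t)$, and then integrate.

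Fix $T \ge 2$ and note that since each step has graph length at most $1$, we have $M_T \le T$ almost surely. I would first choose $q' := 2(1+\delta)+1$ and apply Theorem~\ref{thm:mt2} to the conformal weight $\omega_\tau$, for each $\tau>0$ separately. Since $\E[\omega_\tau(X_0)^2] \le 1$, this yields
\[
   \E\!\left[T^{q'} \wedge \max_{0\le t\le T} \dist_{\omega_\tau}(X_0, X_t)^2\right] \le 1 + C T (\log T)^2,
\]
with $C = C(q,q')$ depending on $q$ and $\delta$.

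Next I would derive a tail bound on $M_T$ from this. By hypothesis, on the event $\{M_T \ge \tau\}$ there is some $t \le T$ with $\dist_G(X_0, X_t) \ge \tau$, so $\dist_{\omega_\tau}(X_0, X_t) \ge c\tau^{1+\delta}$ for an absolute constant $c > 0$; therefore
\[
   \{M_T \ge \tau\} \subseteq \left\{ \max_{0\le t\le T} \dist_{\omega_\tau}(X_0, X_t)^2 \ge c^2 \tau^{2(1+\delta)}\right\}.
\]
For $\tau \in (0,T]$ the threshold $c^2\tau^{2(1+\delta)}$ is bounded by $T^{q'}$ (by the choice of $q'$), so the capped random variable and the uncapped one agree above the threshold; Markov's inequality then gives
\[
   \Pr(M_T \ge \tau) \;\lesssim\; \frac{T(\log T)^2}{\tau^{2(1+\delta)}}, \qquad \tau \in (0, T].
\]

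Finally, I would integrate via the layer cake formula, choosing the threshold $\tau^* := \bigl(T(\log T)^2\bigr)^{1/(2(1+\delta))}$ to balance the trivial bound $\Pr(M_T \ge \tau)\le 1$ against the tail. The contribution for $\tau \le \tau^*$ is at most $(\tau^*)^2 = (T(\log T)^2)^{1/(1+\delta)}$, and the contribution for $\tau \in (\tau^*, T]$ is
\[
   \int_{\tau^*}^{T} 2\tau \cdot \frac{CT(\log T)^2}{\tau^{2(1+\delta)}}\, d\tau
   \;\le\; \frac{CT(\log T)^2}{\delta}\, (\tau^*)^{-2\delta}
   \;=\; \frac{C}{\delta}\, \bigl(T(\log T)^2\bigr)^{1/(1+\delta)}.
\]
Summing and using $2/(1+\delta) < 2$ absorbs the logarithmic factor into $(\log T)^4$, yielding $\E[M_T^2] \le C\, T^{1/(1+\delta)} (\log T)^4$ as required.

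The whole argument is essentially bookkeeping; the only point requiring minor care is the truncation parameter $q'$ in Theorem~\ref{thm:mt2}, and the deterministic bound $M_T \le T$ makes it immediate that $q'$ can be chosen (depending on $\delta$) so the cap never interferes in the range of $\tau$ that contributes. The ``main idea'' is simply that the family $\{\omega_\tau\}$ is used adaptively: one picks a different conformal metric for each dyadic scale of displacement, and optimizes the crossover scale.
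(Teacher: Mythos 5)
Your proof is correct, but it takes a different route from the paper's. The paper aggregates the family $\{\omega_{2^j}\}$ into a single conformal weight $\omega = \bigl(\tfrac{6}{\pi^2}\sum_{j\ge 1} \omega_{2^j}^2/j^2\bigr)^{1/2}$, which still satisfies $\E[\omega(X_0)^2]\le 1$ and distorts distances by $\dist_\omega(x,y)\gtrsim \dist_G(x,y)^{1+\delta}/\log\dist_G(x,y)$; it then applies \pref{thm:mt2} \emph{once} to $\omega$ and finishes with $\dist_G(X_0,X_t)\le T$ and H\"older's inequality. You instead apply \pref{thm:mt2} separately to each $\omega_\tau$, convert each application into a tail bound $\Pr(M_T\ge\tau)\lesssim T(\log T)^2/\tau^{2(1+\delta)}$ via Markov, and integrate by the layer-cake formula with an optimized crossover $\tau^*$. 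Your handling of the truncation parameter $q'$ is the one point needing care and you address it correctly (the deterministic bound $M_T\le T$ guarantees the cap $T^{q'}$ with $q'=2(1+\delta)+1$ never bites for $\tau\le T$). What each approach buys: the paper's mixture trick is a one-shot application and generalizes painlessly when one only has the distortion bound for dyadic $\tau$, but it pays an extra logarithm from the $1/j^2$ normalization (hence $(\log T)^4$); your scale-by-scale tail argument actually yields the slightly sharper $(\log T)^{2/(1+\delta)}$, at the cost of a constant of order $1/\delta$ from the integration and a constant depending on $\delta$ through $q'$ — harmless here since $\delta$ is fixed in all applications (and the paper's own constant has the same implicit $\delta$-dependence).
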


\begin{proof}
   Define the weight
   \[
      \omega \seteq \left(\frac{6}{\pi^2} \sum_{j \geq 1} \frac{\omega^2_{2^j}}{j^2}\right)^{1/2},
   \]
   so that $\E[\omega(X_0)^2] \leq \frac{6}{\pi^2} \sum_{j \geq 1} j^{-2} = 1$.

   Next, consider $x,y \in V(G)$ such that $\dist_G(x,y) \in [2^j, 2^{j+1})$ and $j \geq 0$.  Then:
   \[
      \dist_{\omega}(x,y) \gtrsim \frac{\dist_{\omega_{2^j}}(x,y)}{j} \gtrsim \frac{2^{j(1+\delta)}}{j} \gtrsim \frac{\dist_G(x,y)^{1+\delta}}{\log \dist_G(x,y)}.
   \]
   Applying \pref{thm:mt2} gives
   \begin{align*}
      \E\left[\max_{0 \leq t \leq T} 
         \frac{\dist_G(X_0,X_t)^{2(1+\delta)}}{\left(\log \dist_G(X_0,X_t)\right)^{2}}
   \right] &=
      \E\left[T^{2(1+\delta)} \wedge \max_{0 \leq t \leq T} 
         \frac{\dist_G(X_0,X_t)^{2(1+\delta)}}{\left(\log \dist_G(X_0,X_t)\right)^{2}}
   \right] \\
&\lesssim \E\left[T^{2(1+\delta)} \wedge \max_{0 \leq t \leq T} \dist_{\omega}(X_0,X_t)^2\right] \\
      & \leq C' T (\log T)^2,
   \end{align*}
   for some number $C' = C'(q)$.
   The desired bound follows using $\dist_G(X_0,X_T) \leq T$ and H\"{o}lder's inequality.
\end{proof}

\begin{remark}[Maximal Markov type]\label{rem:maximal-mt} Although alluded to multiple times already, we now formally introduce the notion of Markov type followed by the strengthened notion of Maximal Markov type.
   
A metric space $(X,d)$ is said to have {\em Markov type $p \in [1,\infty)$} if there is a constant $M > 0$ such that for every $n \in \mathbb N$, the following holds. For every reversible Markov chain $\{Z_t\}_{t=0}^{\infty}$ on $\{1,\ldots,n\}$, every mapping $f : \{1,\ldots,n\} \to X$, and every time $t \in \mathbb N$, 
   \begin{equation}\label{eq:mtype} \E \left[d(f(Z_t), f(Z_0))^p\right] \leq M^p t \, \E \left[d(f(Z_0), f(Z_1))^p\right]\,, \end{equation}
   where $Z_0$ is distributed according to the stationary measure of the chain. One denotes by $M_p(X,d)$ the infimal constant $M$ such that the inequality holds.
   The space $(X,d)$ has {\em maximal Markov type $p \in [1,\infty)$} if the following strengthening of \eqref{eq:mtype} holds:
   \[
      \E \left[\max_{1 \leq s \leq t} d(f(Z_s), f(Z_0))^p\right] \leq M^p t \, \E \left[d(f(Z_0), f(Z_1))^p\right].
   \]
   Let us write $M_p^{\max}(X,d)$ for the analogous infimal constant.
   The following theorem is a restatement of \cite[Thm. 5.2]{lee17a}.

   \begin{theorem}\label{thm:lee17a-52}
      Suppose that $(G,X_0)$ is an invariantly amenable reversible random graph.  Then for any conformal metric $\omega$ on $(G,X_0)$, the following holds.
      If there is a number $M$ such that $M_2(V(G),\dist_{\omega}) \leq M$ holds almost surely, then for every $T \geq 1$,
      \[
         \E[\dist_{\omega}(X_0,X_T)^2] \leq M T \E[\omega(X_0)^2].
      \]
   \end{theorem}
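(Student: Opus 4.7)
The plan is to reduce the infinite problem to a finite one via an invariant F\o lner exhaustion, apply the finite Markov type inequality on each piece, and pass to the limit using the mass-transport principle.

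\textbf{Localization.} Invariant amenability of $(G,X_0)$ provides, for each $\varepsilon>0$, a jointly unimodular random equivalence relation $\sim_\varepsilon$ on $V(G)$ whose classes $C_\varepsilon(v)$ are almost surely finite and whose expected edge-boundary-to-volume ratio tends to $0$ as $\varepsilon\to 0$ (cf.\ \cite{aldous-lyons}). On each finite class $C$ I would consider the reflected walk $\widetilde X^{C}$: the chain on $C$ obtained from $\{X_t\}$ by rejecting any transition that would exit $C$. It is reversible on $C$ with stationary measure proportional to $\deg_{G}|_{C}$, and since $(C,\dist_\omega|_{C})$ is a subspace of $(V(G),\dist_\omega)$, it inherits the Markov type $2$ bound with constant at most $M$.

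\textbf{Per-class Markov type and aggregation.} For a fixed class $C=C_\varepsilon(X_0)$, starting $\widetilde X^{C}$ from its stationary measure, the definition of Markov type yields
\[
   \E_{C}\!\left[\dist_\omega(\widetilde X_0^{C},\widetilde X_T^{C})^{2}\right]\;\leq\; M^{2}\,T\,\E_{C}\!\left[\dist_\omega(\widetilde X_0^{C},\widetilde X_1^{C})^{2}\right].
\]
I would then aggregate this inequality over all classes via the mass-transport principle for the unimodular partition, combined with the degree-bias relation between unimodularity and reversibility (\pref{lem:bc12}). The averaging replaces $\E_{C}$ by the root-of-$G$ expectation, and the right-hand side is dominated by $M^{2}T\,\E[\omega(X_0)^{2}]$ using the trivial bound $\dist_\omega(X_0,X_1)\leq(\omega(X_0)+\omega(X_1))/2$ together with the stationarity of $\omega$.

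\textbf{Approximation.} The reflected walk on $C_\varepsilon(X_0)$ agrees with the true walk until the first attempted exit of $C_\varepsilon(X_0)$. A one-step attempted exit has probability equal to the edge-boundary density of $C_\varepsilon(X_0)$, so a union bound and a mass-transport computation give
\[
   \Pr\!\left[\widetilde X_t\neq X_t\text{ for some }t\leq T\right]\;\leq\;T\cdot o_\varepsilon(1).
\]
Using the deterministic bound $\dist_\omega(X_0,X_T)^{2}\lesssim T\sum_{t=0}^{T}\omega(X_t)^{2}$, whose expectation is $O(T^{2}\,\E[\omega(X_0)^{2}])$ by stationarity of $\omega$, a Cauchy--Schwarz argument shows the contribution from the disagreement event is $o_\varepsilon(1)$ for each fixed $T$. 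Sending $\varepsilon\to 0$ then completes the proof.

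\textbf{Main obstacle.} The delicate part is the aggregation in the second paragraph: one must verify that the per-class Markov type inequality, once averaged over the unimodular partition and debiased by degree, gives precisely the root-stationary bound without leftover cross terms between different classes. This is the technical core of \cite[Thm.~5.2]{lee17a} and relies on a careful use of mass transport together with \pref{lem:bc12}. A secondary subtlety is ensuring the approximation error in the third paragraph vanishes as $\varepsilon\to 0$ without any moment assumption on $\omega$ beyond $\E[\omega(X_0)^{2}]<\infty$; this is precisely where invariant amenability (rather than mere amenability) is essential.
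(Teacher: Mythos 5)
Your proposal is correct and follows essentially the same route as the proof of \cite[Thm.~5.2]{lee17a} that the paper merely restates: hyperfiniteness from invariant amenability, the finite Markov type inequality applied to the reflected chain on each finite class (whose stationary measure is indeed $\propto \deg_G|_C$), mass-transport aggregation with degree de-biasing via \pref{lem:bc12}, and a limit as $\varepsilon \to 0$. The one adjustment needed is in the approximation step: Cauchy--Schwarz against the disagreement event would require a fourth moment of $\omega$, but since you already have the $\varepsilon$-independent integrable dominating function $T\sum_{t \le T}\omega(X_t)^2$ and $\Pr[A_\varepsilon] \le T\cdot o_\varepsilon(1)$, absolute continuity of the integral (dominated convergence) kills that term directly, with no extra moment assumption.
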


   We claim that the same argument gives an analogous theorem for maximal Markov type:

   \begin{theorem}
      Suppose that $(G,X_0)$ is an invariantly amenable reversible random graph.  Then for any conformal metric $\omega$ on $(G,X_0)$, the following holds.
      If there is a number $M$ such that $M^{\max}_2(V(G),\dist_{\omega}) \leq M$ holds almost surely, then for every $T \geq 1$,
      \[
         \E\left[\max_{1 \leq t \leq T} \dist_{\omega}(X_0,X_T)^2\right] \leq M T \E[\omega(X_0)^2].
      \]
   \end{theorem}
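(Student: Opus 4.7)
The plan is to repeat the proof of \pref{thm:lee17a-52} verbatim, substituting the maximal Markov type inequality for the ordinary one at the single step where the finite-dimensional hypothesis is invoked. Recall how that proof proceeds: invariant amenability yields, for each $\delta > 0$, an equivariant measurable partition $\{\Pi_x : x \in V(G)\}$ of $V(G)$ into almost surely finite pieces whose vertex-boundaries carry root-stationary mass at most $\delta$. Conditional on this partition, define the reflected random walk $\{Z_t\}_{t \ge 0}$ on $\Pi_{X_0}$: a finite reversible Markov chain coupled to $\{X_t\}$ so that the two trajectories coincide up to the exit time $\tau \seteq \inf\{t \ge 0 : X_t \notin \Pi_{X_0}\}$.

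The crucial step is to apply the maximal Markov type hypothesis to $\{Z_t\}$ on the finite metric space $(V(G) \cap \Pi_{X_0},\, \dist_\omega)$, yielding
\[
   \E\!\left[\max_{1 \le t \le T} \dist_\omega(Z_0, Z_t)^2 \,\middle|\, \Pi_{X_0}\right] \le M T \, \E\!\left[\dist_\omega(Z_0, Z_1)^2 \,\middle|\, \Pi_{X_0}\right].
\]
This is the unique place where Markov type enters the original argument, and its maximal counterpart substitutes in with no structural change. Averaging over the partition and invoking the mass-transport principle exactly as in \cite{lee17a} gives $\E[\dist_\omega(Z_0, Z_1)^2] \le \E[\omega(X_0)^2]$, and hence the desired bound on the event $\{\tau > T\}$, on which the two running maxima $\max_{1 \le t \le T}\dist_\omega(X_0,X_t)^2$ and $\max_{1 \le t \le T}\dist_\omega(Z_0,Z_t)^2$ coincide pointwise.

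To handle the complementary event and pass to the limit, refine the partition so that $\Pr[\tau \le T] \to 0$ as $\delta \to 0$; the contribution of $\{\tau \le T\}$ to the expectation is controlled by the same truncation device used in \pref{thm:mt2} (clipping $\dist_\omega$ at an arbitrarily large but finite threshold produces a uniformly integrable sequence). The essentially sole technical point is to verify that the outer $\max_{1 \le t \le T}$ functional commutes with the coupling and with the limit $\delta \to 0$; this is immediate because, for each fixed $T$, the running maximum is a continuous, bounded, and monotone function of the finite sequence of one-step displacements, so all the measure-theoretic manipulations of \cite{lee17a} go through unchanged. No new ideas beyond this substitution are required; the main \emph{obstacle}, such as it is, is purely bookkeeping.
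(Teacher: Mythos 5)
Your proposal is correct and follows essentially the same route as the paper: the paper's argument is precisely to rerun the proof of \pref{thm:lee17a-52} from \cite{lee17a}, inserting $\max_{1\le t\le T}$ into the left-hand side of the initial (finite-chain Markov type) inequality and propagating it through the subsequent inequalities, which is exactly the substitution you describe. The remaining bookkeeping (mass transport giving $\E[\dist_\omega(Z_0,Z_1)^2]\le\E[\omega(X_0)^2]$ and the limiting argument over the hyperfinite exhaustion) is unchanged, as you note.
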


   This can be obtained directly
   from the proof of Theorem 5.2 in \cite{lee17a} by adding $\max_{1 \leq t \leq T}$ to the left-hand side of the first inequality
   in that proof, and then using this stronger inequality in the 3rd, 4th, and 5th inequalities of that proof.

   The same sort of substitution allows one to modify the proof of \cite[Thm 5.7]{lee17a} to obtain \pref{thm:mt2},
   with one additional observation.  Equation (5.10) in the proof of that theorem employs \cite[Thm 5.3]{lee17a} which
   asserts that for any $n$-point metric space $(X,d)$ with $n \geq 2$, it holds that $M_2(X,d) = O(\log n)$.
   Here one needs to replace this by the stronger fact that $M_2^{\max}(X,d) = O(\log n)$.

   This fact is well-known and follows from two observations:  By Bourgain's embedding theorem \cite{Bourgain85},
   every $n$-point metric space admits an embedding into $\ell_2$ with $O(\log n)$ bilipschitz distortion.
   From the definition, this immediately implies that $M_2^{\max}(X,d) = O(\log n) M_2^{\max}(\ell_2)$, where
   $M_2^{\max}(\ell_2)$ is the maximal Markov type $2$ constant of the separable Hilbert space $\ell_2$.
   And it is known (see, e.g., \cite{npss06}) that $M_2^{\max}(\ell_2) = O(1)$.

   Finally, let us remark that if a stationary random graph $(G,X_0)$ is invariantly amenable,
   then it is automatically also a reversible random graph.
   This is because if $(G,X_0)$ is invariantly amenable, then it is the distributional limit
   of a sequence of finite stationary random graphs, and all such graphs are reversible random graphs.
\end{remark}

\subsection{Proof of \pref{thm:thick-thin}}
\label{sec:thick-thin}

Fix $k \geq 1$ and
denote $p_k \seteq \Pr[X_0 \in {\sfB}^{(k)}_{\circ}]$ where ${\sfB}^{(k)}_{\circ}$ was defined in \eqref{backboneunion}, and
$\omega \seteq \1_{{\sfB}^{(k)}_{\circ}}/\sqrt{p_k}$.
Since $(G,X_0,\Psi_{G,\sC})$ and $(G,X_1,\Psi_{G,\sC})$ have the same law, so do $(G,X_0,\omega)$ and $(G,X_1,\omega)$
(recall \pref{lem:isrev}).
Furthermore, by definition $\E[\omega(X_0)^2] =1.$

Define $D \seteq 3 \lceil C_1 M^{kd}\rceil$,
and consider any pair $x,y \in V(G)$ with $\dist_G(x,y) \geq D$.
Let $\gamma$ be a {\em simple} path in $G$ connecting $x$ and $y$ 
and denote
\[
   Z \seteq \{ z \in V(\gamma) : \dist_{G}(z, \{x,y\}) > D/3 \}.
\]
Since by hypothesis $\dist_G(x,y) \geq D$, it follows that $|Z|\ge D/3.$ Now 
for any $z\in Z$ consider the paths $\gamma_1$ and $\gamma_2$ which are obtained by splitting $\gamma$ at $z.$ Thus $\gamma_1$ and $\gamma_2$ are disjoint simple paths from $x$ to $z$ and  $z$ to $y$ respectively.

Now, note that since $\sC$ is a uniformly $\e$-padded covering system there is almost surely a set
$S_z \in \cC^{(k)}$ for which $B_{\base}(z,\e M^k) \subseteq S_z$.
Moreover, since $\diam_{\base}(S_z) \leq M^k$, we have $|S_{z}|\le C_1 M^{kd}$.
Now since $|\gamma_1|, |\gamma_2|\ge D/3=C_1 M^{kd},$ it follows that both $\gamma_1$ and $\gamma_2$ intersect $S_z^c,$ implying  $z \in \sfB_G[S_z]$.
In particular, we have $z \in {\sfB}^{(k)}_{\circ}$.

Therefore,
\[
   \len_{\omega}(\gamma) \geq p_k^{-1/2} |Z| \gtrsim \sqrt{\frac{M^{\eta k}}{C_3}} \dist_G(x,y)
   \gtrsim \frac{\dist_G(x,y)^{1+\eta/2d}}{C_1^{\eta/2d} C_3^{1/2}},
\] where we used $p_{k}\le C_4 M^{-\eta k}$ by \pref{ass:system}(3). 
Since this holds for any $x$-$y$ path $\gamma$ in $G$, we have
\begin{equation}
   \label{eq:dist-lb}
   \dist_{\omega}(x,y) \gtrsim \frac{\dist_G(x,y)^{1+\eta/2d}}{C_1^{\eta/2d} C_3^{1/2}}.
\end{equation}
Now applying \pref{cor:conformal} with $\delta=\eta/2d$ completes the proof, recalling that $(G,X_0)$ is reversible
by \pref{ass:system}(1).

\subsection{Intrinsic volume growth}

We will improve the speed bound in \pref{thm:thick-thin}
by assuming a high-probability estimate
for the volume of patches in the random subgraph $G$.

\begin{assumption}\label{ass:system2}
   Consider \pref{ass:system} and, in addition, suppose that for some $d' < d$ 
   and number $c_4 > 0$:
\begin{enumerate}
   \item[4.] {\bf Annealed volume estimate.}
      For $k \geq 1$, define
      \begin{align}\label{largepatch}
      \cC^{(k)}_{\bullet} &\seteq \left\{ S \in \cC^{(k)} : |S \cap V(G)| \ge c_4 M^{kd'} \right\} \\
      \nonumber
      \cL^{(k)} &\seteq \bigcup_{S \in \cC^{(k)}_{\bullet}} S.
      \end{align}
      Thus $\cC^{(k)}_{\bullet}$ denotes the set of patches in $\cC^{(k)}$ which have a large intersection with the random subgraph $G$, and $\cL^{(k)}$ denotes the union of all the patches in $\cC^{(k)}_{\bullet}.$ 
      Our assumption states that large patches are rare, i.e., the probability the root falls in a patch of large cardinality is small:
      \begin{equation}\label{eq:big-patch}
         \Pr\left[X_0 \in \cL^{(k)}\right] \leq C_3 M^{-\eta k}.
      \end{equation}
\end{enumerate}
\end{assumption}
\begin{theorem}\label{thm:thick-thin2}
   If $(G,X_0)$ satisfies \pref{ass:system} and \pref{ass:system2},
   then the random walk is subdiffusive:  There is some constant $C \geq 1$ such that
   \[
      \E\left[\max_{0 \leq t \leq T} \dist_G(X_0, X_t)^2\right] \leq C T^{1/(1+\eta/2d')} (\log T)^4, \qquad \forall T \geq 2.
   \]
\end{theorem}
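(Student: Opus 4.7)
The plan is to mirror the proof of \pref{thm:thick-thin} in \pref{sec:thick-thin}, with one important modification: absorb the rare union of ``large'' patches $\cL^{(k)}$ into the support of the conformal weight so that we may shrink the distance threshold $D$ at each scale from $M^{kd}$ down to $M^{kd'}$. Concretely, for each $\tau > 0$ I pick $k = k(\tau)$ so that $M^{kd'} \asymp \tau$, and set
\[
   r_k \seteq \Pr\left[X_0 \in \sfB^{(k)}_{\circ} \cup \cL^{(k)}\right],
\]
which by \pref{ass:system}(3) and \eqref{eq:big-patch} satisfies $r_k \leq (C_3+C_4) M^{-\eta k}$; then define the candidate conformal weight $\omega_\tau \seteq \1_{\sfB^{(k)}_{\circ} \cup \cL^{(k)}}/\sqrt{r_k}$, for which $\E[\omega_\tau(X_0)^2] = 1$. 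Because both $\sfB^{(k)}_{\circ}$ and $\cL^{(k)}$ are automorphism-invariant Borel functions of the marking $\Psi_{G,\sC}$, \pref{fact:projection} makes $(G, X_0, \omega_\tau)$ a stationary random network.

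The main new step is a patch-size dichotomy that replaces the uniform volume bound $|S_z| \leq C_1 M^{kd}$ used in the proof of \pref{thm:thick-thin}. Set $D \seteq 3\lceil c_4 M^{kd'}\rceil$ (so $D \asymp \tau$), fix $x, y \in V(G)$ with $\dist_G(x,y) \geq \tau$, and let $\gamma$ be a simple $x$-$y$ path with $Z \seteq \{z \in V(\gamma) : \dist_G(z, \{x,y\}) > D/3\}$. Using the uniform $\e$-padding, for each $z \in Z$ pick $S_z \in \cC^{(k)}$ with $B_{\base}(z, \e M^k) \subseteq S_z$ and dichotomize: if $|S_z \cap V(G)| \geq c_4 M^{kd'}$ then $S_z \in \cC^{(k)}_{\bullet}$ and hence $z \in \cL^{(k)}$. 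Otherwise, the two subpaths of $\gamma$ obtained by splitting at $z$ each contain more than $c_4 M^{kd'} > |S_z \cap V(G)|$ vertices, so each must exit $S_z$; this forces $z \in \sfB_G[S_z]$, and since $B_{\base}(z, \e M^k) \subseteq S_z$ witnesses $\tau_G(S_z) \geq \e M^k$ with witness $z$ itself, we conclude $S_z \in \cC^{(k)}_{\circ}$ and hence $z \in \sfB^{(k)}_{\circ}$. Either way, $z \in \sfB^{(k)}_{\circ} \cup \cL^{(k)}$, so every $z \in Z$ contributes $r_k^{-1/2}$ to $\len_{\omega_\tau}(\gamma)$.

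Accumulating, $\len_{\omega_\tau}(\gamma) \gtrsim r_k^{-1/2} |Z| \gtrsim M^{\eta k/2}\, \tau$, using $|Z| \geq D/3 \gtrsim \tau$ whenever $\dist_G(x,y) \geq \tau$. Substituting $M^{kd'} \asymp \tau$ gives $\dist_{\omega_\tau}(x,y) \gtrsim \tau^{1+\eta/(2d')}$, and invoking \pref{cor:conformal} with $\delta = \eta/(2d')$ (noting that the polynomial volume growth in \pref{ass:system}(1) supplies the annealed hypothesis \eqref{eq:annealed-growth} with $q = d$) yields the desired subdiffusive bound. I expect the only point requiring care to be the verification that $S_z$ is deep in the second branch of the dichotomy, which comes down to unpacking definitions: the two disjoint $G[S_z]$-paths from $z$ to $V(G) \setminus S_z$ guaranteed by $z \in \sfB_G[S_z]$ in particular meet $\partial_G S_z$, and $B_{\base}(z, \e M^k) \subseteq S_z$ is given. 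Notably, no decorrelation between $\sfB^{(k)}_{\circ}$ and $\cL^{(k)}$ is needed---one simply unions them---so the improvement from $d$ to $d'$ costs only an absolute constant in the bound on $r_k$.
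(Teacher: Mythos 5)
Your proposal is correct and follows essentially the same route as the paper's proof: the same choice of $D \asymp M^{kd'}$, the same patch-size dichotomy placing each $z \in Z$ in either $\sfB^{(k)}_{\circ}$ or $\cL^{(k)}$, and the same appeal to \pref{cor:conformal} with $\delta = \eta/2d'$. The only (immaterial) difference is that you use the normalized indicator of the union $\sfB^{(k)}_{\circ} \cup \cL^{(k)}$ as the conformal weight, where the paper combines the two separately normalized indicators as $\hat{\omega} = \sqrt{(\omega^2+\omega_1^2)/2}$; both give the same lower bound on the weight along $Z$ up to constants.
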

Note the improved exponent compared to \pref{thm:thick-thin}.
\begin{proof} We start by modifying our choice of conformal weight from \pref{sec:thick-thin}. To this end, fix $k \geq 1$.
Define $\omega$ as in \pref{sec:thick-thin} and, furthermore,
\begin{align*}
   q_{k} &\seteq \Pr[X_0 \in \cL^{(k)}] \\
   \omega_1 &\seteq \1_{\cL^{(k)}}/\sqrt{q_{k}} \\
   \hat{\omega} &\seteq \sqrt{(\omega^2 + \omega_1^2)/2}.
\end{align*}
As before, since $(G,X_0,\psi_{G,\cC^{(k)}})$ and $(G,X_1,\psi_{G,\cC^{(k)}})$ have the same law, so do $(G,X_0,\hat{\omega})$ and $(G,X_1,\hat{\omega})$,
as $\omega$ and the set $\cL^{(k)}$ are determined by the marking $\Psi_{G,\sC}$ (recall \pref{lem:isrev}).

Define $D_k \seteq 3 \lceil c_4 M^{k d'}\rceil$, 
and consider any pair $x,y \in V(G)$ with $D_k\leq\dist_G(x,y) \leq D_{k+1}$. {In the rest of the proof we will drop the $k$ dependence and denote $D_k$ by $D.$}
Let $\gamma$ be a simple path connecting $x$ and $y$ in $G$,
and define again
\[
   Z \seteq \{ z \in V(\gamma) : \dist_{G}(z, \{x,y\}) > D/3 \}.
\]
Using the fact that $\sC$ is uniformly $\e$-padded, for each $z \in Z$, let $S_z \in \cC^{(k)}$ be such that $B_{\base}(z, \e M^k) \subseteq S_z$.
Since by hypothesis $\dist_G(x,y) \geq D$, it follows that $|Z|\ge D/3.$ Now as before,
for any $z\in Z,$ consider the paths $\gamma_1$ and $\gamma_2$ which are obtained by splitting $\gamma$ at $z.$ Thus $\gamma_1$ and $\gamma_2$ are disjoint simple paths from $x$ to $z$ and  $z$ to $y$ respectively and further $|\gamma_1|, |\gamma_2|\ge D/3=c_4 M^{k d'}.$

Now, one of two things can happen. Either $|S_z \cap V(G)|< c_4 M^{k d'},$ in which case both $\gamma_1$ and $\gamma_2$ intersect $S_z^c$, implying $z \in \sfB_G[S_z]$.
In particular, we have $z \in {\sfB}^{(k)}_{\circ}$, implying $\omega_1(z) \geq p_{k}^{-1/2}.$ 
Otherwise, $|S_z \cap V(G)|\ge c_4 M^{k d'},$ in which case 
$S_z \in \cC^{(k)}_{\bullet}$ and hence $z\in \cL^{(k)}$, implying  $\omega_1(z) \geq q_{k}^{-1/2}$.

We conclude that
\[
   \omega(z)+\omega_1(z) \geq \min\left(p_{k}^{-1/2},q_{k}^{-1/2}\right),
\]
Now using \pref{ass:system2}(4), this establishes, as in \pref{sec:thick-thin}, that
\[
   \dist_{\hat{\omega}}(x,y) \geq c \dist_G(x,y)^{1+\eta/2d'}
\]
for some number $c > 0$ (independent of $x$ and $y$) where we use the assumed upper bound on $\dist_G(x,y)$.
An application of \pref{cor:conformal} with $\delta=\eta/2d'$ completes the proof.
\end{proof}

\section{The incipient infinite cluster}
\label{sec:iic}

We now prove \pref{thm:speed-iic} establishing subdiffusivity of random walk on the IIC in two dimensions using \pref{thm:thick-thin} and \pref{thm:thick-thin2}.
First, we recall a formulation of the IIC in two dimensions.
Denote $\base \seteq \Z^2$ and $\cS(n) \seteq [-n,n]^2$.
Let $\base^{1/2}$ denote the random subgraph of $\base$ resulting from critical ($p=1/2$)
bond percolation, and let $\hat{G}_n$ denote the largest connected component of the
induced graph $\base^{1/2}[\cS(n)]$.
Choose $\rho_n \in V(\hat G_n)$ uniformly at random.

Let us define $G_n$ as the subgraph of $\Z^2$ that results from translating $\rho_n$
to the origin.  Then J\'arai \cite{Jarai03} has shown that $G_n$ converges weakly to a subgraph $G$ of $\Z^2$
that coincides with Kesten's definition of the IIC.
Since $\rho_n \in V(\hat G_n)$ is chosen uniformly at random, $(G_n,\bm{0})$ is a unimodular random graph (considered
as a random element of $\cG_{\bullet}$), and therefore the weak limit $(G,\bm{0})$ is a unimodular random graph as well
(see, e.g., \cite{BS01} for the simple argument).

Recall the one- and two-arm probabilities defined in \pref{sec:intro} and the statement of \pref{thm:speed-iic}. 
We will start by establishing a slightly weaker bound based on the following theorem.
In conjunction with \pref{thm:thick-thin}, by choosing any $\eta < \eta_{21}$, it yields the statement of \pref{thm:speed-iic}
for $\delta < \eta_{21}/2$.

\begin{theorem}
   \label{thm:iic-system}
   There is a random covering system $\sC$ for $\base$ so that
   $(\base,G,\bm{0},\sC)$ satisfies \pref{ass:system}, and for every $k \geq 1$,
   \[
      \Pr\left[\bm{0} \in {\sfB}^{(k)}_{\circ}\right] \lesssim \frac{\pi_2(2^k)}{\pi_1(2^k)} (\log (2^k))^8\,,
   \]
   where $M=2$ is the scale parameter of $\sC$ (recall \pref{def:csystem}).
\end{theorem}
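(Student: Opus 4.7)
The plan is to take $\sC$ to be the grid covering of \pref{example:z2}, randomized by an independent uniform $\Z^2$-shift at each scale $k$, so that the marginal distribution of $\sC$ becomes invariant under $\Z^2$-translations. Polynomial volume growth of $\base = \Z^2$ is automatic with $d = 2$, and the required geometric properties ($2^k$-boundedness, uniform $\epsilon$-padding for some $\epsilon > 0$, and $\mult(\cC^{(k)}) = O(1)$) are inherited from the deterministic covering. Since $(G,\bm{0})$ is a unimodular random graph (as the weak limit, via J\'arai's theorem, of uniformly rooted finite clusters) and the marginal law of $\sC$ is $\Z^2$-shift-invariant, $(\base, X_0, \Psi_{G, \sC})$ satisfies the stationarity property \eqref{eq:base-stationary}. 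This verifies parts (1) and (2) of \pref{ass:system}.

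The heart of the proof is the bound on $\Pr[\bm{0} \in \sfB^{(k)}_\circ]$. Writing $n \seteq 2^k$, the intersection multiplicity bound lets us union bound over the $O(1)$ patches $S \in \cC^{(k)}$ containing $\bm{0}$, so it suffices to show that for each such $S$,
\[
\Pr\!\left[\bm{0} \in \sfB_G[S] \text{ and } \tau_G(S) \geq \epsilon n\right] \lesssim \frac{\pi_2(n)}{\pi_1(n)}(\log n)^{O(1)}.
\]
On this event, $\bm{0}$ admits two disjoint paths in $G[S]$ reaching $\partial_G S$, and some vertex $v \in S$ with $B_{\base}(v,\epsilon n) \subseteq S$ is connected within $G[S]$ to $\partial_G S$ by a path that travels Euclidean distance at least $\epsilon n$. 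The deep-patch condition is crucial: without it, the backbone condition at $\bm{0}$ could be realized by arms of length much smaller than $n$ when $\bm{0}$ sits near $\partial S$, giving a probability that decays only like $\pi_2(r)/\pi_1(r)$ at some scale $r \ll n$.

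To transfer to critical percolation we use J\'arai's convergence theorem, which yields, for any event $A$ supported on the $n$-neighborhood of $\bm{0}$,
\[
\Pr_{\mathrm{IIC}}[A] \lesssim \frac{\Pr_{p_c}\!\left[A \cap \{\bm{0} \leftrightarrow \partial \cS(n)\}\right]}{\pi_1(n)}.
\]
Under $\Pr_{p_c}$, the combined backbone-plus-deep configuration implies a full-scale two-arm event around $\bm{0}$: the deep arm furnishes a macroscopic connection of length $\epsilon n$ that, together with the two (possibly short) arms out of $\bm{0}$, can be glued into two disjoint arms at Euclidean scale $n$ using RSW crossings. The probability of this configuration is then bounded by $\pi_2(n)$ up to polylogarithmic factors, using the BK inequality, quasi-multiplicativity of arm probabilities, and Kesten's arm-separation lemmas (cf.\ \pref{thm:kesten-arm}). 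The $(\log n)^8$ factor absorbs union bounds over dyadic sub-scales in the gluing argument and over the candidate positions for the deep vertex $v$ inside $S$.

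The main obstacle is this last step: carefully implementing the gluing of the local backbone condition at $\bm{0}$ with the deep-patch arm into a genuine scale-$n$ two-arm event under $\Pr_{p_c}$, with only a polylogarithmic loss. This is essentially a patch-based version of Kesten's backbone-density estimate, and its proof relies heavily on the arm-separation and gluing technology from Kesten's original subdiffusivity argument.
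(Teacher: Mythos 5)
Your setup of the covering system (the randomly shifted dyadic grid, with padding, bounded multiplicity, and stationarity via the uniform shift) matches the paper. The gap is in the core estimate. You union bound over the $O(1)$ patches $S \in \overline{\cC}^{(k)}$ containing $\bm{0}$ and claim that for each one $\Pr[\bm{0}\in\sfB_G[S],\ \tau_G(S)\geq\e n]\lesssim (\pi_2(n)/\pi_1(n))(\log n)^{O(1)}$, on the grounds that the backbone-plus-deep configuration "implies a full-scale two-arm event around $\bm{0}$." This per-patch bound is false. If the shift places $\bm{0}$ at distance $O(1)$ from $\partial S$, then $\{\bm{0}\in\sfB_G[S]\}$ requires only two disjoint microscopic paths out of $S$, while $\{\tau_G(S)\geq\e n\}$ requires only that \emph{some other} vertex $v$ deep inside $S$ be connected to $\partial_G S$; both events are increasing, each has probability $\Theta(1)$ under the IIC measure (the latter by RSW), so their intersection has probability $\Theta(1)$ by FKG. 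The deep vertex $v$ is in general far from $\bm{0}$, and its single long arm cannot be concatenated with $\bm{0}$'s two short arms to yield two \emph{disjoint} arms from $\bm{0}$ to scale $n$: there is no event implication, and probabilistic arm-extension lemmas cost a factor $(n/r)^{O(1)}$, not a constant. What makes such bad patches tolerable is only that they are rare under the random shift ($\Pr[\dist(\bm{0},\partial S)\leq r]\asymp r/n$), and your union bound never invokes this. To salvage your route you would have to average over the shift, prove the IIC two-arm estimate (probability $\lesssim \pi_2(r)/\pi_1(r)$ for two disjoint arms from $\bm{0}$ to distance $r$) at every scale $r\leq n$ via quasi-multiplicativity, and then sum over dyadic $r$, using $\eta_{21}<1$ (which follows from \pref{thm:kesten-arm}) to see that the top scale dominates.

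The paper avoids pointwise probabilities at the root entirely and instead bounds the expected backbone \emph{density} of a deep patch: $\E\left|\sfB_G[D]\right|\lesssim q^2\pi_2(q)$ by a variant of Kesten's Lemma 3.20, while $|D\restrict_G|\gtrsim q^2\pi_1(q)/\lambda$ on the deep event except with probability $Ce^{-c\lambda^{1/8}}$, so that with $\lambda\asymp(\log q)^8$ one gets $\E\left[\frac{|\sfB_G[D]|}{|D\restrict_G|}\1_{\{\tau_G(D)\geq\e 2^k\}}\right]\lesssim\frac{\pi_2(q)}{\pi_1(q)}(\log q)^8$. The mass-transport principle \eqref{eq:mtp2} — this is where the stationarity of the shifted covering is actually used — then converts this density bound into the desired bound on $\Pr[\bm{0}\in\sfB^{(k)}_\circ]$, uniformly in where the root happens to sit inside its patches. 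You should adopt this density-plus-mass-transport structure rather than attempting a gluing argument at the origin.
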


We employ the covering system $\llangle \cC^{(k)} : k \geq 1\rrangle$ from  Example \pref{example:z2}.
Define the randomly shifted covering $\overline{\cC}^{(k)} = (a_k,b_k) + \cC^{(k)}$
where $(a_k,b_k) \in ([0,2^{k-1}) \times [0,2^{k-1}) \cap \Z^2)$ is chosen uniformly at random and independently across $k$, 
and denote $\sC \seteq \langle \overline{\cC}^{(k)} : k \geq 1 \rangle$.

We have established that $\sC$ is a uniformly $1/4$-padded covering system (with scale parameter $M=2$).
Moreover, by  construction of the random shifts $\{ (a_k,b_k) : k \geq 1\}$, it follows that
\eqref{eq:base-stationary} holds.
We have thus verified a covering system for which \pref{ass:system}(ii) holds.
What remains is to verify \pref{ass:system}(iii).

For $S \subseteq V(\base)$, let us denote ${S}\restrict_G \seteq S \cap V(G).$ Also for brevity we will simply use $\sfB_{G}[S],$ $\tau_{G}[S]$ and $\partial_G S$ to denote $\sfB_{G}[S \restrict_{G}]$, $\tau_{G}[S \restrict_{G}],$ and $\partial_G[ S \restrict_{G}]$ respectively.
Kesten's notion of backbone \cite{Kesten86} is somewhat different from ours, and not unimodular.
For a integer $m \geq 0$, define $\tilde{\sfB}_G(m)$
as the set of vertices $v \in V(G) \cap \cS(m)$ such that there are paths
$\gamma$ and $\gamma'$ connecting $v$ to $\partial_{\base} \cS(m)$ and $\bm{0}$, respectively,
and such that $\gamma \cap \gamma' = \{v\}$.

The next lemma is the key geometric input about the IIC that we will rely on. 
It concerns a fixed square $D^{(q)}$ and a dilated copy $D^{(3q)}$ of $D^{(q)}$;
it asserts an upper bound on the expected size of the backbone with respect to $D^{(q)}$,
and a lower bound on the size of $D^{(3q)} \restrict_G$ whenever there
is a path from $\partial_G D^{(q)}$ to $\partial_G D^{(3q)}$.
See \pref{fig:kesten}.

\begin{figure}[h]
\centering
\includegraphics[width=8cm]{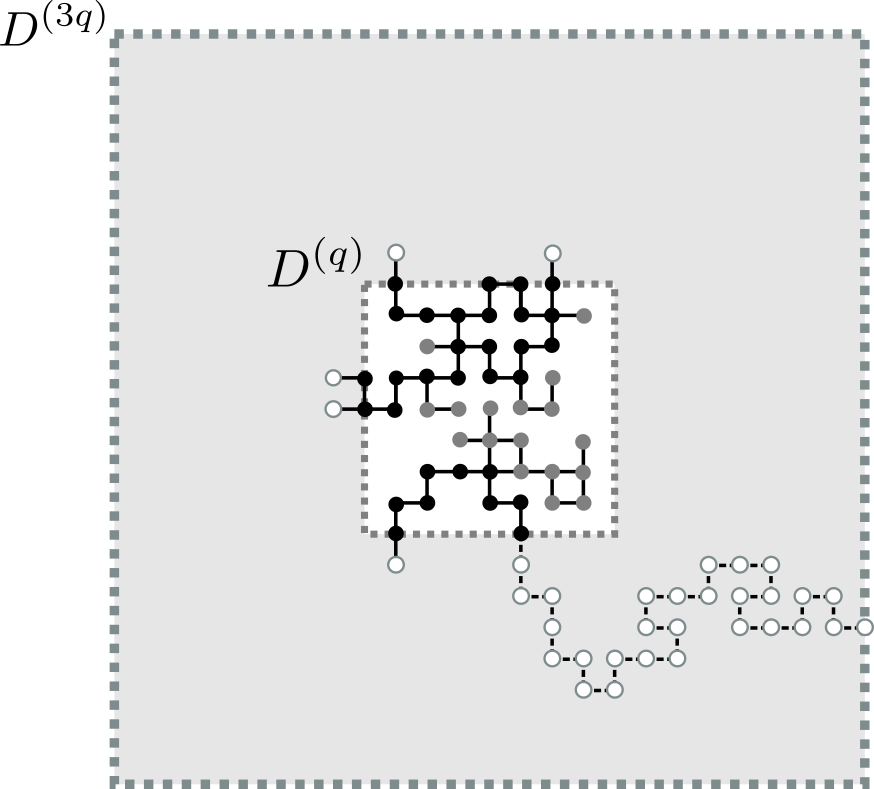}
\caption{$D^{(q)}_{x,y,z}$ inside $D^{(3q)}_{(x-1)/3,(y-1)/3,z}$.
The vertices of $\sfB_G[D^{(q)}_{x,y,z}]$ are colored black.\label{fig:kesten}}
\end{figure}

\begin{lemma}[{\cite[Lem 3.20]{Kesten86}}]
   \label{lem:kesten}
   For $q \in \N$ and $x,y \in \Z, z\in \Z^2$, define
   \begin{align*}
      D_{x,y,z}^{(q)} &\seteq z+[x q, (x+1) q) \times [y q, (y+1) q).
   \end{align*}
   \begin{enumerate}[label=(K\arabic*)]
      \item It holds that for any $m \geq q$,
         \[
            \E\left[|\tilde{\sfB}_{G}(m) \cap D_{x,y,z}^{(q)}]|\right] \lesssim q^2 \pi_2(q).
         \] 
         \label{item:kesten1}
 
      \item There are constants $C,c > 0$ such that for all $\lambda > 0$,
         \[
            \Pr\left(\partial_G D_{x,y,z}^{(q)} \conn{G} \partial_G D_{({x-1})/{3},({y-1})/{3},z}^{(3q)}
            \textrm{ and } \big| D_{(x-1)/3,({y-1})/{3},z}^{(3q)} \restrict_G\big| < \lambda^{-1} q^2 \pi_1(q) \bigmid X_0\right) \leq C \exp\left(-c \lambda^{1/8}\right).
         \]
         \label{item:kesten2}
   \end{enumerate}
\end{lemma}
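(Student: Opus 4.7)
My plan is to treat the two parts separately; (K1) is a first-moment calculation while (K2) is a concentration estimate built on top of (K1)-type inputs.

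\textbf{Strategy for (K1).} By linearity of expectation,
\[
   \E\bigl[|\tilde{\sfB}_G(m) \cap D_{x,y,z}^{(q)}|\bigr] = \sum_{v \in D_{x,y,z}^{(q)}} \Pr_{\iic}\bigl(v \in \tilde{\sfB}_G(m)\bigr).
\]
Membership $v \in \tilde{\sfB}_G(m)$ demands two edge-disjoint paths in $G$: one from $v$ to $\bm 0$ and one from $v$ to $\partial_{\base} \cS(m)$. Using Kesten's realization of the IIC as the weak limit of critical percolation conditioned on $\bm 0 \leftrightarrow \partial \cS(n)$, and applying the BK/Reimer inequality to the underlying critical configuration, this probability decouples into arm probabilities. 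If $v$ lies at distance at least $q$ from both $\bm 0$ and $\partial \cS(m)$, then the two-arm event at scale $q$ centered at $v$ is enforced, giving $\Pr_{\iic}(v \in \tilde{\sfB}_G(m)) \lesssim \pi_2(q)$; vertices within distance $q$ of $\bm 0$ or of $\partial \cS(m)$ are bounded separately via one-arm probabilities at the relevant smaller scale, and their contribution is easily absorbed. Summing the pointwise bound over the $\asymp q^2$ vertices of $D_{x,y,z}^{(q)}$ yields the claim.

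\textbf{Strategy for (K2).} Denote the connection event by $\cA$. I would first establish a matching lower bound for the conditional first moment,
\[
   \E\bigl[|D^{(3q)}_{(x-1)/3,(y-1)/3,z} \restrict_G| \cdot \1_{\cA}\bigr] \gtrsim q^2 \pi_1(q)\, \Pr(\cA),
\]
by summing $\Pr(u \in V(G),\ \cA)$ over $u$ in an annular subregion of $D^{(3q)}$ away from its boundary, so that any connection from $u$ to the crossing guaranteed by $\cA$ occurs at effective scale $\asymp q$, and then invoking quasi-multiplicativity together with the one-arm exponent. Upgrading to a stretched-exponential lower tail proceeds via a higher-moment calculation: bound $\E[|D^{(3q)}\restrict_G|^k \cdot \1_{\cA}]$ by iterating arm-probability estimates on the joint geometry of $k$-tuples of connected vertices, and then combine with the first-moment lower bound through a Paley--Zygmund-type inequality of order $k$. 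Optimizing $k$ against the tail parameter $\lambda$ produces the $\exp(-c \lambda^{1/8})$ rate; the exponent $1/8$ emerges from the worst-case factorial-type growth of the $k$-th moment relative to the $k$-th power of the first moment.

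\textbf{Main obstacle.} The genuinely delicate step is the stretched-exponential tail in (K2). A polynomial tail of the form $\lambda^{-c}$ would follow directly from a second-moment argument, but the $\exp(-c \lambda^{1/8})$ bound requires uniform control on all moments up to order $\sim \lambda^{1/8}$. This in turn hinges on Kesten's separation-of-arms and quasi-multiplicativity estimates, together with a careful multiscale decomposition of the relative geometry of the $k$ witness vertices so that the implicit constants in the moment bounds do not explode with $k$. By comparison, (K1) should follow fairly directly once the IIC-to-critical-percolation transfer is set up, with the BK inequality doing the main work.
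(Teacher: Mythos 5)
First, a point of orientation: the paper does not actually prove this lemma---it is imported wholesale from \cite[Lem 3.20]{Kesten86}---so the comparison is with Kesten's argument rather than with anything in the text. Your sketch of (K1) is essentially the standard route: linearity of expectation, transfer from the IIC to unconditioned critical percolation, and BK/Reimer to extract a two-arm event around each backbone vertex. The one step you pass over too quickly is the contribution of vertices close to $\bm 0$ or to $\partial\cS(m)$: for a vertex at distance $i\le q$ from one of these, the pointwise bound is only $\pi_2(i)$, and absorbing the resulting sum requires the quasi-multiplicativity-type fact $\sum_{i\le q}\pi_2(i)\lesssim q\,\pi_2(q)$ --- exactly the ingredient the paper itself flags via \cite[Remark 37]{Kesten86iic} when deriving \eqref{backboneestimate}. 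That is a real (if standard) input, not a triviality.

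The genuine gap is in your plan for (K2). Upper bounds on the positive moments $\E[X^k\1_{\cA}]$ of the volume $X=\big|D^{(3q)}\restrict_G\big|$, combined with a first-moment lower bound and a Paley--Zygmund inequality of any order $k$, control the wrong tail. Paley--Zygmund gives $\Pr(X>\theta\,\E X)\ge \bigl((1-\theta)\E X\bigr)^{k/(k-1)}\big/\bigl(\E X^k\bigr)^{1/(k-1)}$, and for the right-hand side to be as close to $1$ as $1-e^{-c\lambda^{1/8}}$ (which is what $\Pr(X<\lambda^{-1}\E X)\le e^{-c\lambda^{1/8}}$ demands) one would need $\E X^k$ to exceed $((1-\theta)\E X)^k$ by only a factor $1+O(k e^{-c\lambda^{1/8}})$, i.e.\ $X$ would have to be essentially deterministic. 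The cluster volume is not: its multiplicative fluctuations are of constant order, so no control on positive moments---factorial growth or otherwise---converts into a stretched-exponential \emph{lower} tail. What is actually needed, and what Kesten does, is an independence structure: partition $D^{(3q)}$ into $N=N(\lambda)$ sub-boxes, use RSW circuits to show that, conditionally on the crossing event and on the configuration outside a given sub-box, that sub-box contributes volume $\gtrsim (q/\sqrt N)^2\,\pi_1(q/\sqrt N)$ with conditional probability bounded below (the second-moment/Paley--Zygmund step lives \emph{inside} a single sub-box, where only a constant-probability conclusion is required), and then stochastically dominate the number of successful sub-boxes by a binomial to obtain $e^{-cN}$; the exponent $1/8$ comes from optimizing $N$ against the per-box volume guarantee. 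Your proposal is missing this decomposition entirely, and without it the claimed tail bound is out of reach.
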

For our purposes, we will need the following variant of \pref{item:kesten1} adapted to our notion of backbone. 
 For $q \in \N,$ and $x,y \in \Z, z\in \Z^2$,
\begin{equation}\label{backboneestimate}
 \E\left[\big|\sfB_{G}[D_{x,y,z}^{(q)}]\big|\right]\lesssim q^2 \pi_2(q).
\end{equation}
It is a consequence of the following estimate (obtained by standard percolation-theoretic argument of encircling a box by an open circuit
to pass from the IIC to standard critical bond percolation and noting that the presence of an open circuit can only increase the size of the backbone:
\[
   \E\left[|\sfB_{G}[D_{x,y,z}^{(q)}]|\right]\lesssim q \sum_{i=1}^q \pi_2(i),
\]
along with the fact that $\sum_{i=1}^q \pi_2(i)\lesssim q \pi_2(q)$ (see \cite[Remark 37]{Kesten86iic}).

\begin{proof}[Proof of \pref{thm:iic-system}]
We recall that $\{X_n\}$ is the random walk on $G$ with $X_0 = \bm{0}$.

Consider now some square $D_{x,y,z}^{(q)} \in \overline{\cC}^{(k)}$ with $q=2^k$ and $k \geq 1$.
Set $\e \seteq 1/3$, and recalling the notion of depth from \eqref{eq:depth},
note that
\[
   \tau_G(D_{x,y,z}^{(q)}) \geq \e 2^k \implies \tau_G(D_{x,y,z}^{(q)}) \geq q/3 \implies \partial_G D_{3x+1,3y+1,z}^{(q/3)} \conn{G} \partial_G  D_{x,y}^{(q)}\,.
\]
Hence \pref{item:kesten2} implies that
\[
   \Pr\left[\cE(D,\lambda) \mid \overline{\cC}^{(k)} \right] \leq C \exp\left(-c \lambda^{1/8}\right),\qquad \forall D \in \overline{\cC}^{(k)},
\]
where
\[
   \cE(D,\lambda) \seteq \left\{\tau_G({D}) \geq \e 2^k \textrm{ and } |D\restrict_G\!\!| < \lambda^{-1} q^2 \pi_1(q)\right\}.
\]
In the sequel we will use $D_G$ to denote $D\restrict_G$.
It follows that conditioned on $\overline{\cC}^{(k)},$ for all $ D \in \overline{\cC}^{(k)}$
\begin{align*}
\E& \left[\frac{|\sfB_G[ D]|}{|D_G|} \1_{\{\tau_G({D}) \geq \e 2^k\}} \bigmid \overline{\cC}^{(k)}\right]  \\
     & \qquad \leq \frac{\lambda}{q^2 \pi_1(q)} (1-\Pr[\cE(D,\lambda) \mid \overline{\cC}^{(k)}]) \E\left[|\sfB_G[{D}]| \bigmid \lnot \cE(D,\lambda), \overline{\cC}^{(k)} \right] 
   + q^2 \Pr[\cE(D,\lambda)] \\
     & \qquad\leq \frac{\lambda}{q^2 \pi_1(q)} \E\left[|\sfB_G[{D}]|\right]
   + q^2 \Pr[\cE(D,\lambda)\mid \overline{\cC}^{(k)}] \\
   & \qquad\lesssim \lambda \frac{\pi_2(q)}{\pi_1(q)} 
   + q^2 \Pr[\cE(D,\lambda) \mid \overline{\cC}^{(k)}],
\end{align*}
where the second term in the first inequality uses the bound $|\sfB_G[{D}]\le q^2$, and in the last line we employed \eqref{backboneestimate}.   Choosing $\lambda \asymp (\log q)^{8}$ yields
\begin{align}
   \E\left[\frac{|\sfB_G[{D}]|}{|{D}_G|} \1_{\{\tau_G({D}) \geq \e 2^k\}}\bigmid \overline{\cC}^{(k)}\right]  
   \lesssim \frac{\pi_2(q)}{\pi_1(q)} (\log q)^8\,.\label{eq:ratio-bnd}
\end{align}

For any covering $\cC,$ and $D\in \cC,$ define the values
\begin{align*}
   f(G,x,y,D) &\seteq \1_{\{\tau_G(D) \geq \e 2^k\}} \1_D(x) \1_D(y) \frac{\1_{\sfB_G[D]}(y)}{|D_G|} \\
   F(G,x,y,\cC) &\seteq \sum_{D \in \cC} f(G,x,y,D).
\end{align*}
Let $\overline{\cC}^{(k)}_{\circ} \seteq \{ D \in \overline{\cC}^{(k)} : \tau_G(D) \geq \e 2^k \}$
denote the set of deep patches.
Recalling the definition from \eqref{backboneunion}, then we have:
\begin{align*}
   \E\left[\frac{1}{\deg_G(X_0)} \sum_{y \in V(G)} F(G,X_0,y,\overline{\cC}^{(k)})\right] &=
   \E\left[\sum_{D \in \overline{\cC}^{(k)}_{\circ}} \frac{|\sfB_G[ D]|}{|D_G|} \1_{D}(X_0)\right]
   \\
   \E\left[\frac{1}{\deg_G(X_0)} \sum_{y \in V(G)} F(G,y,X_0,\overline{\cC}^{(k)})\right] &=
   \E\left[\sum_{D \in \overline{\cC}^{(k)}_{\circ}} \1_{\sfB_G[ D]}(X_0) \1_{D}(X_0)\right] \geq \Pr\left[X_0 \in {\sfB}^{(k)}_{\circ}\right].
\end{align*}

Note that the last inequality is not an equality because of covering multiplicity---the event could be counted multiple times
in the middle expression.
Noting that $F(G,x,y,\overline{\cC}^{(k)})$ is an automorphism-invariant function of $(G,x,y,\psi_{G,\cC^{(k)}})$,
we may apply the mass transport principle \eqref{eq:mtp2}, yielding
\[
   \Pr\left[X_0 \in {\sfB}^{(k)}_{\circ}\right] \leq 
   \E\left[\sum_{D \in \overline{\cC}^{(k)}_{\circ}} \frac{|\sfB_G[D]|}{|D_G|} \1_{D}(X_0)\right]
   \stackrel{\eqref{eq:ratio-bnd}}{\lesssim} 
   \frac{\pi_2(q)}{\pi_1(q)} (\log q)^8\,,
\]
where in the last inequality we use the fact that $\mult(\overline\cC^{(k)}) \leq 10$ for every $k \geq 1$.
\end{proof}

To establish \pref{thm:speed-iic}, we need to
verify \pref{ass:system2}(4) for $d' > 2-\eta_1$.
To that end, we need the following result of Kesten.

\begin{theorem}[{\cite[Thm 8]{Kesten86iic}}]
   \label{thm:iic-density}
   For every $t \geq 1$, there is some constant $C_t \geq 1$ such that
   for any square $D_{x,y,z}^{(q)}$ containing the origin,
   \[
      \Pr\left[|D_{x,y,z}^{(q)} \cap V(G)| > \lambda q^2 \pi_1(q)\right] \leq C_t \lambda^{-t},\qquad \forall \lambda > 1.
   \]
\end{theorem}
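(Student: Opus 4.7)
The plan is to prove the tail estimate by the method of moments combined with Markov's inequality. For each integer $t \geq 1$, I would aim to show
\[
   \E\!\left[\left|D_{x,y,z}^{(q)} \cap V(G)\right|^t\right] \lesssim_t \left(q^2 \pi_1(q)\right)^t,
\]
after which Markov applied to the $t$-th power yields the advertised bound with $C_t$ depending only on $t$. Since $t$ was arbitrary, this gives power-law tails of every order, matching the statement. The two ingredients are a first-moment identity and a $t$-point connection estimate in the IIC.

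\textbf{First moment.} Since $D = D_{x,y,z}^{(q)}$ contains the origin and has diameter $O(q)$, every $v \in D$ satisfies $\|v\|_\infty = O(q)$. Using the construction of the IIC as a weak limit of critical percolation conditioned on the one-arm event (or equivalently biased by $\Pr[0 \leftrightarrow \partial \cS(n)]$ and sent to $n \to \infty$), one obtains $\Pr_{\mathrm{IIC}}[v \in V(G)] \lesssim \pi_1(q)$ for all $v \in D$, via a routine RSW/gluing argument that reduces the IIC connection probability to a critical-percolation two-arm-to-boundary computation. Summing over $|D| \asymp q^2$ vertices yields $\E[|D \cap V(G)|] \lesssim q^2 \pi_1(q)$.

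\textbf{Higher moments.} I would expand
\[
   \E\!\left[|D \cap V(G)|^t\right] = \sum_{v_1,\dots,v_t \in D} \Pr_{\mathrm{IIC}}[v_1,\dots,v_t \in V(G)]
\]
and bound the $t$-point IIC connectivity. Here the IIC description as critical percolation biased by a one-arm event reduces the problem to bounding, uniformly in large $n$, the probability that $v_1,\dots,v_t$ all connect to $0$ and $0 \leftrightarrow \partial \cS(n)$, divided by $\pi_1(n)$. Using the BK inequality, this joint connection event is bounded by a sum (over tree topologies on $\{0, v_1,\dots,v_t, \partial \cS(n)\}$) of products of two-point functions. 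Organizing the vertices by dyadic scales of mutual distances, each such product is controlled by $\pi_1(q)^t \pi_1(n)$ times a product of polylogarithmic scale factors, with quasi-multiplicativity of $\pi_1$ supplying the needed gluing. Summing over tree topologies and over positions $v_1,\dots,v_t \in D$ then gives the target moment bound $\lesssim_t (q^2\pi_1(q))^t$.

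\textbf{Main obstacle.} The hard part is the clean bookkeeping for the $t$-point function: one must enumerate all relevant tree topologies on $t+2$ marked points, control the sum using quasi-multiplicativity of $\pi_1$, and absorb the cost of $n \to \infty$ (the arm to $\partial \cS(n)$) into the normalization. The combinatorial explosion is what forces $C_t$ to depend on $t$ rather than being universal. Once this is done, the Markov inequality step
\[
   \Pr\!\left[|D\cap V(G)| > \lambda q^2 \pi_1(q)\right] \;\leq\; \frac{\E[|D\cap V(G)|^t]}{(\lambda q^2 \pi_1(q))^t} \;\leq\; C_t \lambda^{-t}
\]
is immediate and completes the proof.
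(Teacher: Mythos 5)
First, a point of reference: the paper does not prove this statement. It is imported verbatim from Kesten's 1986 IIC paper (Theorem 8 there) and used as a black box in the proof of Theorem~1.2, so there is no in-paper argument to compare against. What can be judged is whether your plan is a viable reconstruction of Kesten's proof. At the level of architecture it is: the upper tail is indeed obtained by showing $\E\bigl[|D\cap V(G)|^t\bigr]\lesssim_t (q^2\pi_1(q))^t$ for every integer $t$ and applying Markov, and the moment bound does go through the normalized $t$-point connection probability at criticality, decomposed according to the branching structure of the cluster and glued with quasi-multiplicativity.

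Two of your intermediate claims, however, are wrong as stated, and the second one is where the real difficulty sits. (i) The pointwise bound $\Pr[v\in V(G)]\lesssim \pi_1(q)$ for all $v\in D$ is false: the correct statement is $\Pr[v\in V(G)]\asymp \pi_1(\dist(0,v))$, which is of order $1$ for $v$ near the origin. The first moment bound survives only after summing, via $\sum_{j\le q} j\,\pi_1(j)\lesssim q^2\pi_1(q)$ (which needs the a priori polynomial lower bound on $\pi_1(j)/\pi_1(q)$). (ii) More seriously, the BK/tree-graph bound by sums over tree topologies of \emph{products of two-point functions} is not a usable tool in two dimensions at criticality: already for $t=1$ the sum over the location of the branch point is dominated by distant branch points and exceeds the true answer by a polynomial factor (indeed it can exceed $1$); the Aizenman--Newman tree-graph inequality is effective only under a triangle-type condition, i.e., in high dimensions. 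Kesten's actual argument replaces two-point functions by one-arm probabilities between consecutive dyadic annuli around a skeleton of branch points, so that the entropy $\asymp 2^{2j}$ of placing a branch point at scale $2^j$ is exactly compensated by the arm probability to that scale, and quasi-multiplicativity prevents losses at the gluings; your later sentence about dyadic scales gestures at this, but it is a different (and much more delicate) decomposition than the one you invoke, not a refinement of it. The uniform-in-position bound ``each product is $\lesssim \pi_1(q)^t\pi_1(n)$ times polylogs'' is likewise not correct before the scale-by-scale summation is performed. So: right strategy, but the two estimates you would need to carry it out are not the ones you wrote down.
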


\begin{proof}[Proof of \pref{thm:speed-iic}]
Fix some $\e > 0$ and take $d' \seteq 2 - \eta_1 + \e$.
For $k \geq 1$, denote $q \seteq 2^k$ and $\lambda \seteq 2^{\e k}$.
Then \pref{thm:iic-density} gives
\[
   \Pr[\bm{0} \in \cL^{(k)}] \lesssim C_t 2^{-\e k t}.
\]
Setting $t \seteq \eta/\e$, where $\eta = \eta_{21}$, gives a bound of the form \eqref{eq:big-patch}.
Now applying \pref{thm:thick-thin2} yields the desired result.
\end{proof}

\subsection*{Acknowledgements}
The authors thank two anonymous referees for several helpful comments. 
They are also grateful to the Simons Institute, where
this research was initiated. SG was partially supported by NSF DMS-1855688, NSF CAREER Award DMS-1945172, and a Sloan Research Fellowship.
JL was partially supported by NSF CCF-1616297 and a Simons Investigator Award.

\bibliographystyle{alpha}
\bibliography{diffusive}

\end{document}